\def\multiset#1#2{\ensuremath{\left(\kern-.3em\left(\genfrac{}{}{0pt}{}{#1}{#2}\right)\kern-.3em\right)}}
\newcommand{\m}{\mathfrak{m} }
\newcommand{\ov}{\overline}
\newcommand{\type}{\operatorname{type}}
\newcommand{\depth}{\operatorname{depth}}
\newcommand{\tpe}{\operatorname{type}}
\newcommand{\Ext}{\operatorname{Ext}}
\newcommand{\lm}{\lambda}
\newcommand{\proset}{\,\mathrel{\lower 4pt\hbox{$\scriptscriptstyle/$}
		\mkern -14mu\subseteq }\,} 
\newcommand\tsup[2][2]{%
 \def\useanchorwidth{T}%
  \ifnum#1>1%
    \stackon[-.5pt]{\tsup[\numexpr#1-1\relax]{#2}}{\scriptscriptstyle\sim}%
  \else%
    \stackon[.5pt]{#2}{\scriptscriptstyle\sim}%
  \fi%
}
\newtheorem{theorem}{Theorem}[section]
\newtheorem{corollary}[theorem]{Corollary}
\newtheorem{lemma}[theorem]{Lemma}
\newtheorem{proposition}[theorem]{Proposition}
\newtheorem{question}[theorem]{Question}
\theoremstyle{definition}
\newtheorem{remark}[theorem]{Remark}
\newtheorem{example}[theorem]{Example}
\title[Bounds for reduction number] {Bounds for the reduction number of primary ideal in dimension three}
\author[M Mandal and K Saloni]{Mousumi Mandal and Kumari Saloni}
\subjclass[2020]{13H10, 13D40, 13A30}
\keywords{Cohen-Macaulay local rings, reduction number, Ratliff-Rush filtration, Hilbert coefficients}
\address{Department of Mathematics, Indian Institute of Technology Kharagpur, 721302, India} \email{mousumi@maths.iitkgp.ac.in}
\address{Department of Mathematics, Indian Institute of Technology Patna, Bihta, Patna 801106, India}
\email{ksaloni@iitp.ac.in}
\begin{document}
\maketitle
\begin{abstract}
Let $(R,\m)$ be a Cohen-Macaulay local ring of dimension $d\geq 3$ and $I$ an $\m$-primary ideal of $R$. Let $r_J(I)$ be the reduction number of $I$ with respect to a minimal reduction $J$ of $I$. 
Suppose $\depth G(I)\geq d-3$. We prove that $r_J(I)\leq e_1(I)-e_0(I)+\lm(R/I)+1+(e_2(I)-1)e_2(I)-e_3(I)$, where $e_i(I)$ are  Hilbert coefficients. Suppose $d=3$ and $\depth G(I^t)>0$ for some $t\geq 1$. Then we prove that  $r_J(I)\leq e_1(I)-e_0(I)+\lm(R/I)+t$. 
%
%
\end{abstract}
%
%

%
%
%
\section{Introduction}
Let $(R,\m)$ be a Noetherian local ring of dimension $d\geq 1$ and $I$ an $\m$-primary ideal. 
 A sequence of ideals $\mathcal I=\{I_n\}_{n\in \mathbb Z}$ is called an {\it $I$-admissible  filtration} \index{ $I$-admissible  filtration} if $(i)$ $I_{n+1}\subseteq I_n$,
   $(ii)$  $I_m I_n\subseteq I_{m+n}$ and $(iii)~ I^n\subseteq I_n\subseteq I^{n-k}$ for some $k\in \mathbb N$. A {\it reduction} \index{reduction} of $\mathcal I=\{I_n\}$ is an ideal $J\subseteq I_1$ such that $JI_n=I_{n+1}$ for  $n\gg 0$ and it is called {\it minimal reduction} if it is  minimal with respect to containment among all reductions. A Minimal reduction of $\mathcal I$ exists and is generated by $d$ elements if $R/\m$ is infinite. Minimal reductions are important in the study of Hilbert functions and blow-up algebras. For a minimal reduction $J$ of $\mathcal I$, we define \[r_J(\mathcal I)= \sup \{n\in \mathbb Z \mid I_n\not= JI_{n-1}\} \mbox{ and } 
  r(\mathcal I)=\min\{r_J(\mathcal I)~|~ J \mbox{ is a minimal reduction of } \mathcal I\},\]
known as {\it{reduction number of $\mathcal I$ with respect to $J$}} and  {\it{reduction number of $\mathcal I$}}  respectively. 
We write $r_J(I)$ and $r(I)$ in place of $r_J(\mathcal I)$ and $r(\mathcal I)$ respectively for $\mathcal I=\{I^n\}_{n\in\mathbb{Z}}$. 
Reduction number is an important data associated to an ideal which contains information about the depth and structural properties of the associated graded ring $G(\mathcal I)=\mathop\oplus\limits_{n\geq 0}I_n/I_{n+1}$. The number $r_J(I)$ can be seen as a measure of how closely $J$ and $I$ are related. In general, it may be hard to compute $r_J(I)$. We look for bounding $r_J(I)$ in terms of other invariants of the ring or the ideal such as embedding dimension, Hilbert coefficients etc. 
The Hilbert coefficients of $\mathcal I$ are the unique integers $e_i(\mathcal I)$, $0\leq i\leq d,$ such that the function  $H_{\mathcal I}(n):=\lm(R/I_n)$ coincides with the following polynomial for $n\gg 0:$  $$P_{\mathcal I}(x) =e_0(\mathcal I){x+d-1\choose d}-e_1(\mathcal{ I}){x+d-2\choose d-1}+\cdots +(-1)^de_d(\mathcal I).$$ 
Here $\lm(*)$ denotes the length function. The function $H_{\mathcal I}(n)$ and the polynomial $P_{\mathcal I}(x)$ are known as the Hilbert-Samuel function and the Hilbert-Samuel polynomial of $\mathcal I$ respectively.  For $\mathcal I=\{I^n\}$, we write $e_i(I)$ instead of $e_i(\mathcal I)$. We refer to \cite{rv} for the related background material. \\

It is well known that if $\depth G(I) \geq d-1$, then $r_J ( I)$ does not depend on the minimal reduction $J$. Further, if $R$ is a one dimensional Cohen-Macaulay local ring then $r(I)\leq e_0(I)-1$. 
In \cite[Theorem 2.45]{v}, Vasconcelos proved that in a Cohen-Macaulay local ring of dimension $d\geq 1$, \begin{equation}\label{theorem-intro-old1}
    r(I)\leq \frac{d. e_0(I)}{o(I)}-2d+1
\end{equation}  where  $o(I)$  is the largest positive integer $n$ such that $I \subseteq \m^n$. 

A non-Cohen-Macaulay version of the above result can be found in \cite[Theorem 3.3]{gghv13}. Let $R$ be a Cohen-Macaulay local ring of dimension at most two. In \cite[Corollary 1.5]{r}, Rossi proved that 
\begin{equation}\label{rossi-bound} 
r_J(I)\leq e_1(I)-e_0(I)+\lm(R/I)+1
\end{equation} for a minimal reduction $J\subseteq I.$
Since then many attempts have been made to achieve a bound of similar character in higher dimension. For instance,  the  bound in \eqref{rossi-bound} holds in all dimensions if $\depth G(I)\geq d-2$ \cite[Theorem 4.3]{rv} or if $e_1(I)-e_0(I)+\lm(R/I)=1$ \cite[Theorem 3.1]{gno08}. Another case is when $I\subseteq k[x,y,z]$ is of codimension $3$ generated by five quadrics \cite[Theorem 2.1 and Proposition 2.4]{hsv15}. However, no example is known to counter the 
relation in \eqref{rossi-bound}
in higher dimension. In this paper, our objective is to find bounds for $r_J(I)$ in dimension three involving higher Hilbert coefficients.  We prove the following generalization of Rossi's result \cite[Theorem 4.3]{rv} in dimension three: 
\begin{theorem}\label{theorem-intro-1}
Let $(R,\m)$ be a Cohen-Macaulay local ring of dimension three and $I$ an $\m$-primary ideal with $\depth G(I^t)>0$ for some $t\geq 1.$ Let $J\subseteq I$ be a minimal reduction of $I$.  Then
 \begin{equation}
 r_J(I)\leq e_1(I)-e_0(I)+\lm(R/I)+t.\end{equation}
 Furthermore, if $r_J(I)\equiv k \mod t$, $1\leq k\leq t-1$, then 
  \begin{equation*} r_J(I)\leq e_1(I)-e_0(I)+\lm(R/I)+k.\end{equation*}
\end{theorem}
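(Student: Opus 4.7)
The plan is to reduce the problem to dimension two via a generic superficial element, apply Rossi's dimension-two bound \cite[Corollary 1.5]{r} to the quotient, and then use the hypothesis $\depth G(I^t)>0$ to control the depth defect between $R$ and the quotient. I would first choose $J=(x_1,x_2,x_3)$ a minimal reduction of $I$ generated by a sufficiently general superficial sequence for the $I$-adic filtration. Since $R$ is Cohen--Macaulay, $x_1$ is a non-zerodivisor and $\bar R:=R/(x_1)$ is a two-dimensional Cohen--Macaulay local ring; $\bar J:=J\bar R$ is a minimal reduction of the $\bar{\m}$-primary ideal $\bar I:=I\bar R$. By Rossi's theorem in dimension two,
$$r_{\bar J}(\bar I)\leq e_1(\bar I)-e_0(\bar I)+\lm(\bar R/\bar I)+1.$$
Since $x_1$ is superficial, $e_i(\bar I)=e_i(I)$ for $i=0,1$ and $\lm(\bar R/\bar I)=\lm(R/I)$, so setting $s:=r_{\bar J}(\bar I)$ gives $s\leq e_1(I)-e_0(I)+\lm(R/I)+1$.

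Lifting the equalities $\bar I^{n+1}=\bar J\bar I^n$ (which hold for all $n\geq s$) back to $R$ yields $I^{n+1}=JI^n+\bigl((x_1)\cap I^{n+1}\bigr)$. Since $x_1\in J$, the equality $(x_1)\cap I^{n+1}=x_1 I^n$ forces $I^{n+1}=JI^n$. Bounding $r_J(I)$ thus reduces to locating the smallest $n\geq s$ at which the ``depth defect'' $\bigl((x_1)\cap I^{n+1}\bigr)/(x_1 I^n)$ vanishes.

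The main obstacle is the following technical lemma, which is where the hypothesis $\depth G(I^t)>0$ enters: \emph{for a generic choice of $x_1$, the defect $\bigl((x_1)\cap I^{n+1}\bigr)/(x_1 I^n)$ cannot remain nonzero on $t$ consecutive values of $n$}. The strategy is to argue by contradiction using an element $a\in I^t$ whose initial form $a^*$ is regular on $G(I^t)$: were the defect nonzero for all of $n,n+1,\ldots,n+t-1$, then the action of $a$ on these defect modules, combined with a Koszul-style manipulation shifting by $t$ levels, would produce a nonzero element of $G(I^t)$ annihilated by $a^*$, contradicting regularity. Granted this lemma, some $n_0\in[s,s+t-1]$ satisfies $I^{n_0+1}=JI^{n_0}$, so
$$r_J(I)\leq n_0\leq s+t-1\leq e_1(I)-e_0(I)+\lm(R/I)+t.$$

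For the ``furthermore'' part, the location of $n_0$ within $[s,s+t-1]$ is further constrained by the congruence $r_J(I)\equiv k\pmod t$. I would refine the key lemma to track the residue class modulo $t$ at which the defect can first vanish, together with the minimality condition $I^{r_J(I)}\neq JI^{r_J(I)-1}$, to pin down $n_0\leq s+k-1$ when $1\leq k\leq t-1$, giving the sharper bound $r_J(I)\leq e_1(I)-e_0(I)+\lm(R/I)+k$.
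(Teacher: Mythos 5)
Your overall architecture coincides with the paper's: pass to $\bar R=R/(x_1)$ for a superficial $x_1$, apply Rossi's two-dimensional bound to get $s=r_{\bar J}(\bar I)\leq e_1(I)-e_0(I)+\lm(R/I)+1$, and then control the defect $\bigl((x_1)\cap I^{n+1}\bigr)/x_1I^n\cong (I^{n+1}:x_1)/I^n$ using $\depth G(I^t)>0$. The gap is that your ``main obstacle'' lemma --- the only place the hypothesis enters --- is asserted rather than proved, and the mechanism you sketch for it is doubtful. The paper's route (Lemmas \ref{l1} and \ref{general-bound}) is through the Ratliff--Rush filtration: for a superficial element one has $(I^{n+1}:x_1)\subseteq(\widetilde{I^{n+1}}:x_1)=\widetilde{I^{n}}$, and $\depth G(I^t)>0$ forces $\widetilde{I^{mt}}=I^{mt}$ for all $m\geq 1$; hence the defect vanishes at \emph{every multiple of} $t$, which is both stronger and cleaner than ``no $t$ consecutive nonvanishing values.'' Your proposed alternative --- extracting from $t$ consecutive nonzero defects a nonzero element of $G(I^t)$ killed by a regular initial form $a^*$ --- is not carried out, and it is unclear how it would be: an element $u\in(I^{n+1}:x_1)\setminus I^n$ lives at $I$-adic level $n$, not at a multiple of $t$, and the degree-shifting by powers of $I$ that your ``Koszul-style manipulation'' would require can push $u$ into the relevant power of $I$, destroying exactly the nonvanishing you need to propagate. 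As written this step is a restatement of what must be shown, not an argument.

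The window formulation also undermines your ``furthermore'' clause. Knowing only that the defect vanishes somewhere in each block of $t$ consecutive integers yields $r_J(I)\leq s+t-1$, but the refinement for $r_J(I)\equiv k\pmod t$ requires knowing \emph{where} the vanishing occurs. With the precise version one argues: if $r_J(I)=mt+k$ with $1\leq k\leq t-1$ and $s\leq mt$, then $\widetilde{I^{mt}}=I^{mt}$ makes the defect vanish at $n=mt$, forcing $I^{mt+1}=JI^{mt}$ and hence $r_J(I)\leq mt$, a contradiction; so $s\geq mt+1$ and $r_J(I)=mt+k\leq s+k-1$. Your plan to ``refine the key lemma to track the residue class'' is precisely this missing ingredient, but it is deferred rather than supplied. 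If you replace your key lemma by the two Ratliff--Rush facts above, both parts of the theorem follow by the bookkeeping you describe.
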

As a consequence, if $r_J(I)$ is odd and $\depth G(I^2)>0$ then 
 $r_J(I)\leq e_1(I)-e_0(I)+\lm(R/I)+1.$ Furthermore, we prove the following bound in dimension $d\geq 3$.  
\begin{theorem}\label{theorem-intro-2}
Let $(R,\m)$ be a Cohen-Macaulay local ring of dimension $d\geq 3$  and $I$ an $\m$- primary ideal  with $\depth G(I)\geq d-3$. Then 
\begin{equation}\label{our-main-bound}
    r_J(I)\leq e_1(I)-e_0(I)+\lm(R/I)+1+(e_2(I)-1)e_2(I)-e_3(I).
\end{equation}
\end{theorem}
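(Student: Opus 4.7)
The plan is to reduce from dimension $d$ to dimension three using the hypothesis $\depth G(I)\ge d-3$, then descend once more to dimension two via a superficial element, and invoke the known two-dimensional Rossi bound \eqref{rossi-bound}; the quantities that arise when the inequality is transferred back to $R$ are then absorbed by the correction term $(e_2(I)-1)e_2(I)-e_3(I)$.

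\emph{Step 1 (down to $d=3$).} By the depth hypothesis, one can choose $x_1,\dots,x_{d-3}\in I$ whose initial forms in $G(I)$ constitute a regular sequence that extends to a minimal reduction of $I$. Standard Sally-Huckaba-Marley theory shows that, upon passing to $R/(x_1,\dots,x_{d-3})$, each $e_i(I)$ with $0\le i\le 3$, the length $\lm(R/I)$, and the reduction number $r_J(I)$ are all preserved. Thus we may assume $d=3$.

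\emph{Step 2 (down to $d=2$).} Pick a superficial element $x\in I$ extending to a minimal reduction $J=(x,y,z)$ and set $\bar R=R/(x)$, $\bar I=I\bar R$, $\bar J=J\bar R$. Since $\bar R$ is two-dimensional Cohen-Macaulay, Rossi's bound \eqref{rossi-bound} gives
\[
r_{\bar J}(\bar I)\ \le\ e_1(\bar I)-e_0(\bar I)+\lm(\bar R/\bar I)+1.
\]
The usual Huckaba-Marley exact sequences yield $e_0(\bar I)=e_0(I)$, $\lm(\bar R/\bar I)=\lm(R/I)$ and $e_1(\bar I)=e_1(I)$, while $e_2(\bar I)=e_2(I)-\sigma$ for a nonnegative $\sigma$ measuring the failure of $x$ to act regularly on the associated graded ring.

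\emph{Step 3 (back to $R$).} The central estimate is a bound of the form $r_J(I)\le r_{\bar J}(\bar I)+\delta$, where $\delta$ counts the degrees in which $I^{n+1}\cap(x)\ne xI^n$. Combining this with the comparison of Step 2, and using Itoh's inequalities for three-dimensional Cohen-Macaulay rings (namely $e_3(I)\ge 0$ together with an Itoh-Narita-type bound of the form $e_3(I)\le e_2(I)^2-e_2(I)-\delta$), one extracts the advertised correction $(e_2(I)-1)e_2(I)-e_3(I)$ and concludes the proof.

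\emph{Main obstacle.} The technical heart of the argument lies in Step 3: producing the precise expression $(e_2-1)e_2-e_3$ requires a careful analysis of the Sally module $S_J(I)=\bigoplus_{n\ge 0}I^{n+1}/JI^n$, whose Hilbert function is governed by $e_2(I)$ and $e_3(I)$. The nontrivial bookkeeping is to identify the quadratic factor $e_2(e_2-1)$ via a telescoping estimate on the successive Sally layers and to match it with the additive $\delta$ coming from the failure of $x$ to be a $G(I)$-regular element.
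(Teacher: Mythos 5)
Your proposal correctly identifies the general shape of the argument (reduce to $d=3$ by superficial elements using $\depth G(I)\ge d-3$, descend to dimension two, apply Rossi's bound \eqref{rossi-bound}, and control the error by a term in $e_2$ and $e_3$), and Steps 1--2 are essentially sound, apart from the claim $e_2(\bar I)=e_2(I)-\sigma$: for a superficial element in dimension three, $e_2$ is preserved exactly. But Step 3, which you yourself flag as the ``technical heart,'' is not a proof: it is precisely the content of the theorem, and the two tools you invoke to carry it out do not exist in the generality you need. First, $e_3(I)\ge 0$ is \emph{false} for arbitrary $\m$-primary ideals in a Cohen--Macaulay local ring; Itoh's nonnegativity holds for normal (or asymptotically normal) ideals, and indeed the paper uses elsewhere that $e_2(I)=0$ forces $e_3(I)\le 0$. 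Second, there is no ``Itoh--Narita-type bound'' $e_3(I)\le e_2(I)^2-e_2(I)-\delta$ to cite; assuming it amounts to assuming the inequality $\delta\le (e_2-1)e_2-e_3$ that you need, so the argument is circular at the decisive point. Also, $\delta$ defined as the number of degrees with $I^{n+1}\cap(x)\neq xI^n$ does not obviously satisfy $r_J(I)\le r_{\bar J}(\bar I)+\delta$ without further work (what matters is the largest such degree beyond $r_{\bar J}(\bar I)$, and one must relate it to Ratliff--Rush closures as in the paper's Lemma \ref{l1}).

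For comparison, the paper's route is different in its mechanism and avoids the Sally module entirely. It works with the Ratliff--Rush filtration $\mathcal F=\{\widetilde{I^n}\}$ and starts from the inequality $r_J(I)\le\sum_{n\ge0}v_n(\mathcal F)-e_0(I)+\lm(R/I)+1$ extracted from the proof of Rossi's theorem. The discrepancy $\sum_{n\ge0}v_n(\ov{\mathcal F})-e_1(I)$ is identified with $\sum_{n\ge1}\lm\bigl(H_2(C_.(\ov{\mathcal F},n))\bigr)$ via Marley's modified Koszul complex, then bounded above by $e_3(\widetilde{\ov{\mathcal F}})-e_3(I)$; finally $e_3(\widetilde{\ov{\mathcal F}})\le a_2\bigl(\mathcal R(\widetilde{\ov{\mathcal F}})\bigr)\,e_2(I)\le(e_2(I)-1)e_2(I)$ is obtained from Blancafort's difference formula, the monotonicity of the lengths of the graded pieces of $H^2_{\mathcal R_+}$, and the paper's own bound $\widetilde r_J(\mathcal I)\le e_2(\mathcal I)+1$ (Theorem \ref{lemma-bound-on-rtilde}). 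To repair your write-up you would need to supply an argument of comparable substance for your Step 3; as it stands, the quadratic correction term is asserted rather than derived.
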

Though the bound in \eqref{our-main-bound} is quadratic in $e_2(I)$, we illustrate various examples where it is tighter than earlier known bounds. For small values of $e_2(I)$ in dimension three, we obtain linear bounds for $r_J(I)$ in terms of $e_i(I)$, $0\leq i\leq 3$, in Corollary \ref{corr-main-result-1}. 
\\

It is worth asking if we can find similar bounds for $r_J(I)$ in Noetherian local rings. In \cite{gghv}, Ghezzi et. al. proved a nonlinear bound in terms of Hilbert coefficients  in two dimensional Buchsbaum local rings of positive depth. We prove the following results:
\begin{theorem}\label{theorem-intro-3}
Let $(R,\m)$ be a Buchsbaum local ring of dimension $d\leq 2$ and $I$ an $\m$-primary ideal. \begin{enumerate}
    \item Let $d=1$. Then $r_J(I)\leq e_1(I)-e_1(J)-e_0(I)+\lm(R/I)+2$.
    \item Let $d=2$ and $\depth G(I^t)>0$ for some $t\geq 1$. Then  $r_J(I)\leq e_1(I)-e_1(J)-e_0(I)+\lm(R/I)+t+1$.
\end{enumerate}
\end{theorem}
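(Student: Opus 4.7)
The plan is to adapt Rossi's length-counting technique (used for the Cohen--Macaulay analogues \eqref{rossi-bound}) to the Buchsbaum setting. The term $e_1(I) - e_1(J)$ arises naturally because, unlike in the Cohen--Macaulay case, a minimal reduction $J$ of $I$ is generated by a system of parameters that is not regular; Northcott's vanishing $e_1(J) = 0$ fails and is corrected by the local cohomology module $H^0_{\m}(R)$.

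\textbf{Part (1).} Assume $d=1$ and, after a faithfully flat extension, that $R/\m$ is infinite, so that $J = (x)$ with $x$ a parameter. The key feature of a Buchsbaum ring of dimension one is that $(0 :_R x) = H^0_{\m}(R)$ is a finite length submodule annihilated by $\m$, with $e_1(J) = -\lm(H^0_{\m}(R))$. I would first establish the colon formula
\begin{equation*}
(xI^n :_R x) = I^n + (0 :_R x) \qquad (n \geq 0),
\end{equation*}
which replaces the Cohen--Macaulay identity $(xI^n :_R x) = I^n$. Using it, each $\lm(I^{n+1}/xI^n)$ becomes a length difference involving $\lm(R/I^n)$, $\lm(R/I^{n+1})$, and the correction $h_n := \lm\bigl((0 :_R x)/((0 :_R x) \cap I^n)\bigr)$. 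Telescoping the sum $\sum_{n \geq 1} \lm(I^{n+1}/xI^n)$ and comparing with the Hilbert polynomials of $I$ and $J$ produces a formula of shape
\begin{equation*}
\sum_{n \geq 1} \lm(I^{n+1}/xI^n) = e_1(I) - e_1(J) - e_0(I) + \lm(R/I) + 1,
\end{equation*}
where the extra $+1$ beyond the Cohen--Macaulay count comes from reconciling $\lm(R/xR)$ with $e_0(I)$ in the Buchsbaum case. Since each nonzero term in the sum is at least $1$ and nonzero precisely for $1 \leq n \leq r_J(I) - 1$, the stated bound follows.

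\textbf{Part (2).} With $d = 2$ and $\depth G(I^t) > 0$, I would choose a general element $x \in J \setminus \m J$ that is superficial for $I$ and whose initial form in $G(I^t)$ is regular. Set $\bar R = R/xR$, a Buchsbaum local ring of dimension one, and $\bar I = I\bar R$, $\bar J = J\bar R$. The regularity hypothesis gives the Valabrega--Valla type equality $xR \cap I^{n+1} = xI^n$ for $n \geq t-1$, which yields
\begin{equation*}
r_J(I) \leq r_{\bar J}(\bar I) + t - 1
\end{equation*}
together with the preservation $e_i(\bar I) = e_i(I)$ and $e_i(\bar J) = e_i(J)$ for $i = 0, 1$. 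Applying Part (1) to $(\bar R, \bar I, \bar J)$ then gives
\begin{equation*}
r_J(I) \leq e_1(\bar I) - e_1(\bar J) - e_0(\bar I) + \lm(\bar R/\bar I) + 2 + (t-1) = e_1(I) - e_1(J) - e_0(I) + \lm(R/I) + t + 1.
\end{equation*}

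\textbf{Main obstacle.} The subtlest point is the accurate extraction of the $H^0_{\m}(R)$-correction in Part (1): one must track how the quotients $(0 :_R x)/((0 :_R x) \cap I^n)$ behave for small $n$, since in a Buchsbaum ring the Hilbert function need not agree with the Hilbert polynomial at $n = 0$, and the advertised constant $+2$ in the bound leaves very little slack. In Part (2), the descent $e_i(\bar I) = e_i(I)$ across a superficial element is not automatic in the Buchsbaum setting unless the initial form of $x$ is regular on $G(I^t)$; verifying this and the accompanying estimate $r_J(I) \leq r_{\bar J}(\bar I) + t - 1$ is the step where the hypothesis $\depth G(I^t) > 0$ must be exploited carefully.
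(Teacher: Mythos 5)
Your Part (2) is essentially the paper's argument: choose a superficial element and use the estimate $r_J(I)\leq r_{J/(x)}(I/(x))+t-1$ (the paper's Lemma \ref{general-bound}, obtained via Ratliff--Rush closures rather than a Valabrega--Valla equality) to reduce to dimension one. The problem is Part (1), where your route genuinely departs from the paper's and has a gap at its central step.

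The telescoping identity you assert, $\sum_{n\geq 1}\lm(I^{n+1}/xI^n)=e_1(I)-e_1(J)-e_0(I)+\lm(R/I)+1$, is not what your own setup produces. Write $W=H^0_{\m}(R)=(0:_Rx)$. Your colon formula $(xI^n:_Rx)=I^n+W$ gives $\lm(I^{n+1}/xI^n)=\lm(R/xR)-h_n+\lm(R/I^n)-\lm(R/I^{n+1})$ with $h_n=\lm(W)-\lm(W\cap I^n)$, and since $\lm(R/xR)=e_0(I)+\lm(W)$, the two copies of $\lm(W)$ cancel \emph{termwise}; summing against $\lm(R/I^{N+1})=e_0(I)(N+1)-e_1(I)$ for $N\gg 0$ yields
\[
\sum_{n\geq 1}\lm(I^{n+1}/xI^n)=e_1(I)-e_0(I)+\lm(R/I)+\sum_{n\geq 1}\lm(W\cap I^n).
\]
This is not the formula you claim (for instance, if $W\cap I=0$ the right side is $e_1(I)-e_0(I)+\lm(R/I)$, whereas your expression equals $e_1(I)+\lm(W)-e_0(I)+\lm(R/I)+1$), and, more importantly, to deduce the theorem from it you must bound the correction term $\sum_{n\geq 1}\lm(W\cap I^n)$ by $\lm(W)+1=-e_1(J)+1$, which essentially amounts to showing $W\cap I^2=0$. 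Nothing in your outline does this, and the non-increasing sequence $\lm(W\cap I^n)$ could a priori stay at $\lm(W)$ for several values of $n$; this is precisely the ``subtlest point'' you flag, but flagging it does not close it. The paper sidesteps the issue entirely: it passes to $S=R/W$, which is one-dimensional Cohen--Macaulay, notes that $I^{r+2}\subseteq JI^{r+1}+IW=JI^{r+1}$ because $\m W=0$ in a Buchsbaum ring, hence $r_J(I)\leq r_{JS}(IS)+1$, and then applies Rossi's bound \eqref{rossi-bound} in $S$ together with the comparisons $e_0(IS)=e_0(I)$ and $e_1(IS)-e_0(IS)+\lm(S/IS)\leq e_1(I)-e_1(J)-e_0(I)+\lm(R/I)$. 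If you want to keep a direct length count, run it in $S$, where the colon formula has no correction term; run in $R$, it does not deliver the stated bound without a further argument.
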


The main difficulty in generalizing the bound in \eqref{rossi-bound} in higher dimension is that  $r_J(I)$ does not behave well with respect to superficial elements. This fact is closely related to Ratliff-Rush closure $\{\widetilde{I^n}\}$ of powers of $I$, see Lemma \ref{l1}. We recall the definition more generally for an $I$-admissible filtration $\mathcal I$. The Ratliff-Rush filtration  of $\mathcal I$ is the filtration $\{\widetilde{I_n}=\mathop\bigcup\limits_{t\geq 0}(I_{n+t}:I^t)\}_{n\in\mathbb{Z}}$. 
For  a minimal reduction $J$ of $\mathcal I$, we set $$\widetilde{r}_J(\mathcal I):=\sup\{n\in \mathbb Z|\widetilde{I_{n}}\not=J\widetilde{I_{n-1}}\}.$$ We write $\widetilde{r}_J(I)$ if $\mathcal{I}=\{I^n\}$. Note that if $\depth G(I)>0$, then $\widetilde{r}_J(I)=r_J(I)$.  In \cite{rs}, Rossi and Swanson proved that in a Cohen-Macaulay local ring of dimension two $\widetilde{r}_J(I)\leq r_J(I)$. It follows that $\widetilde{r}_J(I)\leq e_1(I)-e_0(I)+\lm(R/I)+1$ in dimension two. We extend the result of Rossi and Swanson for any $I$-admissible filtration in Proposition \ref{mafi}.

It is natural to ask if $r_J(\mathcal I)\leq  e_1(\mathcal I)-e_0(\mathcal 
I)+\lm(R/I_1)+1$ for an $I$-admissible filtration $\mathcal I$. This is largely unknown, even in smaller dimensions. 
In Theorem \ref{lemma-bound-on-rtilde}, we prove that $\widetilde{r}_J(\mathcal I)\leq e_2(\mathcal I)+1$ and subsequently discuss the cases when  $\widetilde{r}_J(\mathcal I)= e_2(\mathcal I)+1$ and $\widetilde{r}_J(\mathcal I)=e_2(\mathcal I)$.
\\


This paper is organised in four sections. 
In Section \ref{section-2}, we prove  $\widetilde{r}_J(\mathcal I)\leq r_J(\mathcal I)$ 
and discuss bounds on $\widetilde{r}_J(\mathcal I)$ in two dimensional Cohen-Macaulay local rings. In Section \ref{section-3}, we establish Theorem \ref{theorem-intro-1} and its consequences. Then we  gather various cases when the bound $r_J(I)\leq e_1(I)-e_0(I)+\lm(R/I)+1$ holds in dimension three. 
We also prove Theorem \ref{theorem-intro-3} in this section.
In Section \ref{section-4}, we prove Theorem \ref{theorem-intro-2}.
Examples are given in support of our bounds being better than the earlier known bounds.

\section{Bound for $\widetilde{r}_J(\mathcal I)$ in dimension two}\label{section-2}
For this section, let $R$ be a Cohen-Macaulay local ring of dimension two. Inspired by Rossi's bound in \eqref{rossi-bound},  one can ask whether $r_J(\mathcal I)\leq e_1(\mathcal{I})-e_0(\mathcal I)+\lm(R/I_1)+1$ for an $I$-admissible filtration $\mathcal I$? In general, it is not known. Suppose $I=\tilde{I}$. Then for the case $\mathcal{I}=\{\widetilde{I^n}\}_{n\in\mathbb{Z}}$, the above question has affirmative answer  which follows from a result of Rossi and Swanson \cite[Proposition 4.5]{rs}. They proved that $\widetilde{r}_J(I)\leq r_J(I)$. We generalize \cite[Proposition 4.5]{rs} for an $I$ admissible filtration. Further in this section, we prove certain bounds for $\widetilde{r}_J(\mathcal I)$. 
\begin{proposition}\label{mafi}
Let $(R,\m)$ be a Cohen-Macaulay local ring of dimension two, $I$ an $\m$-primary ideal and $\mathcal I=\{I_n\}$ an $I$-admissible filtration. Then, for a minimal reduction $J$ of $\mathcal I$,  $\widetilde{r}_J(\mathcal I)\leq r_J(\mathcal I)$.
\end{proposition}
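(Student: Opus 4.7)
The plan is to show that $\widetilde{I_n} = J\widetilde{I_{n-1}}$ for every $n > r_J(\mathcal{I})$, which is exactly the content of $\widetilde{r}_J(\mathcal{I}) \leq r_J(\mathcal{I})$. The containment $J\widetilde{I_{n-1}} \subseteq \widetilde{I_n}$ is immediate from admissibility: if $y \in \widetilde{I_{n-1}}$ with $yI^t \subseteq I_{n-1+t}$ and $a \in J \subseteq I_1$, then $ay I^t \subseteq I_1 \cdot I_{n-1+t} \subseteq I_{n+t}$ by property (ii), so $ay \in \widetilde{I_n}$.

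For the reverse inclusion, set $r = r_J(\mathcal{I})$ and fix $n \geq r+1$. Iterating $I_{m+1} = JI_m$ (valid for $m \geq r$) gives $I_{n+t} = J^{t+1} I_{n-1}$ for every $t \geq 0$, so any $x \in \widetilde{I_n}$ with $xI^t \subseteq I_{n+t}$ satisfies $xI^t \subseteq J^{t+1} I_{n-1}$. Passing to $R[T]_{\m R[T]}$ to make the residue field infinite, I pick $J = (a,b)$ with $a,b$ a superficial sequence for $\mathcal I$; the two-dimensional Cohen-Macaulay hypothesis makes $a,b$ a regular sequence in $R$.

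The heart of the argument is now to deduce $x \in J\widetilde{I_{n-1}}$ from $xI^t \subseteq J^{t+1} I_{n-1}$. The plan is to follow the approach of \cite[Proposition 4.5]{rs}, which, for the special case $\{I^n\}$, leverages the Koszul relations of the regular sequence $a,b$ together with the superficial colon identity $I_{n+t} : a \subseteq I_{n+t-1}$ valid for $t \gg 0$. Splitting $J^{t+1} I_{n-1} = a J^t I_{n-1} + b J^t I_{n-1}$ and analyzing the colon $(J^{t+1} I_{n-1} : I^t)$ with the help of the regular-sequence structure, one identifies this colon as a submodule of $J\widetilde{I_{n-1}}$.

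The main obstacle is the adaptation of the Rossi-Swanson argument from $\{I^n\}$ to an arbitrary admissible filtration, where products $I_t I_{n-1}$ are only contained in, rather than equal to, $I_{n-1+t}$. The reduction-number hypothesis restores the required equality $I_{n+t} = J^{t+1} I_{n-1}$ above $r$; the delicate point is to verify that each colon-and-Koszul manipulation in the original proof continues to function once the filtration $\{I_n\}$ takes the place of the powers $\{I^n\}$, relying at each step on condition (ii) of admissibility rather than strict multiplicativity.
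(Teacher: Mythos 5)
Your overall strategy coincides with the paper's: reduce to showing $\widetilde{I_{n+1}} = J\widetilde{I_n}$ for all $n \geq r_J(\mathcal I)$ and adapt the Rossi--Swanson argument from the $I$-adic case to an admissible filtration. The easy containment $J\widetilde{I_{n-1}}\subseteq \widetilde{I_n}$ and the observation that $I_{n+t}=J^{t+1}I_{n-1}$ for $n>r_J(\mathcal I)$ are fine. But the proof stops exactly where the real work begins: the sentence asserting that ``analyzing the colon $(J^{t+1}I_{n-1}:I^t)$ with the help of the regular-sequence structure, one identifies this colon as a submodule of $J\widetilde{I_{n-1}}$'' is the entire content of the proposition, and you do not carry it out. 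You even flag the issue yourself (``the delicate point is to verify that each colon-and-Koszul manipulation in the original proof continues to function''); announcing that a verification is needed is not the verification, so as it stands this is a plan rather than a proof.

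For comparison, here is the chain you would need to supply (it is what the paper does). Write $J=(x,y)$ with $x,y$ a regular sequence and use $\widetilde{I_m}=(I_{m+k}:(x^k,y^k))$ for $k\gg 0$ and $m=n-1,n,n+1$ --- note this single-colon description of the Ratliff--Rush closure, rather than the union $\bigcup_t(I_{m+t}:I^t)$ you work with, is what makes the computation tractable, and it needs to be justified. For $a\in\widetilde{I_{n+1}}=(J^{k+1}I_n:(x^k,y^k))$ one has $ax^k\in J^{k+1}I_n\subseteq x^k\cdot xI_n+yI_{n+k}$, and regularity of $x,y$ gives $a\in xI_n+y(I_{n+k}:x^k)$; symmetrically $a\in yI_n+x(I_{n+k}:y^k)$. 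Equating the two expressions $a=xr_1+ys_1=yr_2+xs_2$ and using regularity again produces $\alpha$ with $r_2-s_1=\alpha x$, $r_1-s_2=\alpha y$, and $\alpha x^{k+1},\alpha y^{k+1}\in I_{n+k}$, hence $\alpha\in(I_{n+k}:(x^{k+1},y^{k+1}))=\widetilde{I_{n-1}}$; this forces $s_1,s_2\in\widetilde{I_n}$ and finally $a\in JI_n+J\widetilde{I_n}= J\widetilde{I_n}$. None of this double-decomposition argument appears in your write-up, and it is precisely the part where ``condition (ii) of admissibility rather than strict multiplicativity'' has to be checked. (A minor additional point: $J$ is given, so you cannot choose it to be generated by a superficial sequence; fortunately only regularity of the generators is needed, which holds for any minimal generating set of a parameter ideal in a two-dimensional Cohen--Macaulay ring.)
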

\begin{proof}
Let $r=r_J(\mathcal I)$ and $J=(x,y)$ with $x,y$  a regular sequence in $R$. We show that for all $n\geq r$, $\widetilde{I_{n+1}}=J\widetilde{I_{n}}.$ For $k\gg 0$, we may write $\widetilde{I_{m}}=(I_{m+k}:(x^k,y^k))$ for $m=n-1,n,n+1$. Let $a\in \widetilde{I_{n+1}}= (I_{n+1+k}:(x^k,y^k))=(J^{k+1}I_n:(x^k,y^k))$. Then $ax^k\in J^{k+1}I_n\subseteq x^k xI_n+yI_{n+k}.$ Let $ax^k=x^kb+yc$ where $b\in xI_n$ and $c\in I_{n+k}.$ This gives $x^k(a-b)=yc$. Since $x,y$ is a regular sequence we have $a-b=dy$ and $c=x^kd$ for some $d\in R$. As $c\in I_{n+k}$, we get $d\in (I_{n+k}:x^k)$ and $a-b=dy\in y(I_{n+k}:x^k)$. Therefore, $$a\in xI_n+y(I_{n+k}:x^k).$$ 
By similar arguments, we can show that $a\in yI_n+x(I_{n+k}:y^k)$. Now let $a=xr_1+ys_1=yr_2+xs_2$ where $r_1, r_2\in I_n$,  $s_1\in (I_{n+k}:x^k)$ and $s_2\in  (I_{n+k}:y^k)$. Then $x(r_1-s_2)=y(r_2-s_1)$ which implies $r_2-s_1=\alpha x$ and $r_1-s_2=\alpha y$ for some  $\alpha\in R$. Then $\alpha x^{k+1}=r_2 x^k-s_1x^k\in I_{n+k}$ and $\alpha y^{k+1}=r_1y^k-s_2 y^k\in I_{n+k}$. These give $\alpha\in  (I_{n+k}:(x^{k+1},y^{k+1}))=\widetilde{I_{n-1}}.$ Therefore $r_1-s_2=\alpha y\in J\widetilde{I_{n-1}}\subseteq \widetilde{I_{n}}$ and $r_2-s_1=\alpha x\in J\widetilde{I_{n-1}}\subseteq \widetilde{I_{n}}$. This gives $s_1,s_2\in  \widetilde{I_n}$ and $a=xr_1+ys_1\in JI_n+J\widetilde{I_n}\subseteq J\widetilde{I_n}$. Therefore $\widetilde{I_{n+1}}=J\widetilde{I_{n}}$ for all $n\geq r$. 
\end{proof}
In Example \ref{example-n}, we see a class of examples with $\widetilde{r}_J(\mathcal I)< r_J(\mathcal I)$.  In the next theorem, we give an upper bound for $\widetilde{r}_J({\mathcal I})$ in dimension two. We may assume from now on that $I\not=J$. 
\begin{theorem}\label{lemma-bound-on-rtilde}
Let $(R,\m)$ be a two dimensional Cohen-Macaulay local ring, $I$ an $\m$-primary ideal and $\mathcal{I}=\{I_n\}$ an $I$-admissible filtration. Then, for a minimal reduction $J\subseteq I$,  $$\widetilde r_J({\mathcal{I}})\leq e_2(\mathcal{I})+1.$$
 Furthermore, consider the following statements:
\begin{enumerate}[label=(\roman*)]
    \item \label{roman-1} $\widetilde r_J({\mathcal{I}})=e_2(\mathcal{I})+1$;
    \item \label{roman-2} $\widetilde{I_{n+1}}=J\widetilde{I_n}$ for all $n\neq0,e_2(\mathcal{I})$;
    \item \label{roman-3}$\lm(\widetilde{I_{n+1}}/J\widetilde{I_n})=1$ for $n=e_2(\mathcal{I})$;
    \item \label{roman-4} $e_1(\mathcal{I})=e_0(\mathcal{I})-\lm(R/\widetilde{I_1})+1$.
\end{enumerate}
We have \ref{roman-4} $\implies$ \ref{roman-3} $\implies$ \ref{roman-1} $\Longleftrightarrow$ \ref{roman-2} and all four are equivalent if $e_2(\mathcal{I})\neq 0.$ 
\end{theorem}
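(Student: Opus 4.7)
My plan is to derive everything from two sum identities for the Ratliff--Rush filtration $\widetilde{\mathcal I}=\{\widetilde{I_n}\}$, obtained by applying the positive-depth version of the Huckaba--Marley formulas in dimension two. The preliminary step is to verify that $\widetilde{\mathcal I}$ itself has positive depth. If $x\in I_1$ is superficial for $\mathcal I$ and $xy\in\widetilde{I_{n+1}}$, then $xy\cdot I^t\subseteq I_{n+1+t}$ for some $t$, and superficiality gives $(I_{n+1+t}:x)\subseteq I_{n+t}$ once $t\gg 0$; hence $y\in\widetilde{I_n}$, so $x$ is $G(\widetilde{\mathcal I})_+$-regular. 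Since $\widetilde{I_n}=I_n$ for $n\gg 0$, the ideal $J$ is simultaneously a minimal reduction of $\mathcal I$ and of $\widetilde{\mathcal I}$ and $e_i(\mathcal I)=e_i(\widetilde{\mathcal I})$ for every $i$. The Huckaba--Marley formulas applied to $\widetilde{\mathcal I}$ then yield
\[
e_1(\mathcal I)-e_0(\mathcal I)+\lm(R/\widetilde{I_1}) \;=\; \sum_{n\geq 1}\lm(\widetilde{I_{n+1}}/J\widetilde{I_n}),\qquad
e_2(\mathcal I) \;=\; \sum_{n\geq 1} n\,\lm(\widetilde{I_{n+1}}/J\widetilde{I_n}).
\]
The inequality $\widetilde r_J(\mathcal I)\leq e_2(\mathcal I)+1$ is immediate from the second identity: writing $r=\widetilde r_J(\mathcal I)$, the term at $n=r-1$ is a positive integer, so it contributes at least $r-1$ to the right-hand side.

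For the four-way equivalence, the argument is bookkeeping on these sums. For \ref{roman-1}$\Leftrightarrow$\ref{roman-2}, at the extremal value $r=e_2+1$ the term at $n=e_2$ alone already contributes $e_2$ to the $e_2$-sum, forcing every other term to vanish and this one to equal exactly $1$; this gives \ref{roman-2} (and simultaneously \ref{roman-3}), while the reverse direction is clear from the second sum. \ref{roman-3}$\Rightarrow$\ref{roman-1} is the observation that $\lm(\widetilde{I_{e_2+1}}/J\widetilde{I_{e_2}})=1$ forces $\widetilde r_J(\mathcal I)\geq e_2+1$, matched against the upper bound. For \ref{roman-4}$\Rightarrow$\ref{roman-3}, hypothesis \ref{roman-4} turns the first identity into $\sum_{n\geq 1}\lm(\widetilde{I_{n+1}}/J\widetilde{I_n})=1$, so a unique index $m\geq 1$ carries the mass; plugging this into the $e_2$-sum gives $e_2=m$, which is precisely \ref{roman-3}. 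Finally, when $e_2(\mathcal I)\neq 0$, the equivalences already obtained show that only the single index $n=e_2$ contributes to the first sum and it contributes $1$, which is \ref{roman-4}.

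The main obstacle is establishing the two Huckaba--Marley identities for an arbitrary $I$-admissible filtration with positive depth rather than only for the $I$-adic one; the positive-depth check on $\widetilde{\mathcal I}$ is the essential ingredient, after which the formulas come out of the standard degree-by-degree analysis of the Koszul complex on the regular sequence generating $J$ in $G(\widetilde{\mathcal I})$. Once these identities are in place, the bound and all four equivalences fall out of elementary counting.
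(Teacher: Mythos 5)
Your proposal is correct and follows essentially the same route as the paper: both rest on the identities $e_1(\mathcal I)=\sum_{n\geq 0}v_n(\widetilde{\mathcal I})$ and $e_2(\mathcal I)=\sum_{n\geq 0}n\,v_n(\widetilde{\mathcal I})$ for the Ratliff--Rush filtration (the paper cites Rossi--Valla for these, where you propose to rederive them after checking $\depth G(\widetilde{\mathcal I})>0$), and the rest is the same bookkeeping on the two sums. The only point you gloss over is the direction \ref{roman-2}$\implies$\ref{roman-1} when $e_2(\mathcal I)=0$, which is not "clear from the second sum'': there you must rule out $\widetilde r_J(\mathcal I)=0$, i.e.\ $\widetilde{I_1}=J$, and the paper does this via its standing assumption $I\neq J$.
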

\begin{proof}
Since $\depth G(\widetilde{\mathcal{I}})\geq 1,$ we have  $e_2(\mathcal{I})=\mathop\sum\limits_{n\geq 0}nv_n(\widetilde{\mathcal{I}})$ by \cite[Theorem 2.5]{rv}, where $v_n(\widetilde{\mathcal I})=\lm(\widetilde{I_{n+1}}/J\widetilde{I_n})$. This gives $nv_n(\widetilde{\mathcal{I}})=0$ for all $n\geq e_2(\mathcal{I})+1$. Hence $\widetilde{{I}_{n+1}}=J\widetilde{{I}_{n}}$  for all $n\geq e_2(\mathcal I)+1$, i.e., $\widetilde r_J({\mathcal{I}})\leq e_2(\mathcal{I})+1$.

Now we show \ref{roman-1}$\Longleftrightarrow$ \ref{roman-2}. Suppose $\widetilde r_J({\mathcal{I}})=e_2(\mathcal{I})+1$. Then $\widetilde{I_{n+1}}=J\widetilde{I_n}$ for all $n\geq e_2(\mathcal{I})+1$ and $\widetilde{I_{n+1}}\neq J\widetilde{I_n}$ for $n=e_2(\mathcal{I})$. This gives \ref{roman-2} when $e_2(\mathcal{I})=0$. When $e_2(\mathcal{I})\neq 0$, we have 
$0< e_2(\mathcal{I})=\sum_{n=0}^{e_2(\mathcal{I})}nv_n(\widetilde{\mathcal{I}})$ with $v_{e_2(\mathcal{I})}(\widetilde{\mathcal{I}})\neq0$ which implies $\lm(\widetilde{I_{n+1}}/J\widetilde{I_n})=1$ for $n=e_2(\mathcal{I})$ and $\widetilde{I_{n+1}}=J\widetilde{I_n}$ for all $1\leq n\leq e_2(\mathcal{I})-1.$ This gives \ref{roman-2}.  For the converse, suppose the assumption in \ref{roman-2} holds. When $e_2(\mathcal{I})=0$, $\widetilde r_J({\mathcal{I}})\leq 1$. In fact, $\widetilde r_J({\mathcal{I}})= 1$. Otherwise, $\widetilde r_J({\mathcal{I}})=0$ gives $\widetilde{I_1}=J$, which is not true. Now suppose $e_2(\mathcal{I})\neq 0$. Then $e_2(\mathcal{I})=e_2(\mathcal{I})\lm(\widetilde{I_{e_2(\mathcal{I})+1}}/J\widetilde{I_{e_2(\mathcal{I})}})$ which implies $\lm(\widetilde{I_{e_2(\mathcal{I})+1}}/J\widetilde{I_{e_2(\mathcal{I})}})=1.$ Therefore $\tilde{r}_J({\mathcal{I}})\geq e_2(\mathcal{I})+1$. Note that the above arguments also prove \ref{roman-1}$\implies$ \ref{roman-3} and \ref{roman-2}$\implies$ \ref{roman-3} when $e_2(\mathcal{I})\neq 0$.

Suppose \ref{roman-3} holds. Then $\widetilde r_J({\mathcal{I}})\geq e_2(\mathcal{I})+1$. Since  $\widetilde r_J({\mathcal{I}})\leq e_2(\mathcal{I})+1$, we get the equality as in \ref{roman-1}.

Suppose \ref{roman-4} holds. Since  $e_1(\mathcal{I})=\sum_{n=0}^{e_2(\mathcal{I})}\lm(\widetilde{I_{n+1}}/J\widetilde{I_n}),$ we have $e_1(\mathcal I)=e_0(\mathcal{I})-\lm(R/\widetilde{I_1})+1$ if and only if $\sum_{n=0}^{e_2(\mathcal{I})}\lm(\widetilde{I_{n+1}}/J\widetilde{I_n})=\lm(\widetilde{I_1}/J)+1$. This forces $e_2(\mathcal{I})\neq 0$ and 
$\sum_{n= 1}^{e_2(\mathcal{I})}\lm(\widetilde{I_{n+1}}/J\widetilde{I_n})=1$. Therefore $\widetilde{I_{n+1}}=J\widetilde{I_n}$ for all $n\geq1$ except one, say $n=n_0$, $1\leq n_0\leq e_2(\mathcal{I})$ and $\lm(\widetilde{I_{n_0+1}}/J\widetilde{I_{n_0}})=1$. Then $e_2(\mathcal{I})=\mathop\sum\limits_{n=0}^{e_2(\mathcal{I})}n\lm(\widetilde{I_{n+1}}/J\widetilde{I_n})=n_0\lm(\widetilde{I_{n_0+1}}/J\widetilde{I_{n_0}})=n_0$. This proves \ref{roman-4}$\implies$ \ref{roman-2} and \ref{roman-4}$\implies$ \ref{roman-3}.

Finally assume $e_2(\mathcal{I})\neq 0$ and \ref{roman-2} holds. Then  we get $e_1(\mathcal{I})=\sum_{n=0}^{e_2(\mathcal{I})}\lm(\widetilde{I_{n+1}}/J\widetilde{I_n})=\lm(\widetilde{I_{1}}/J)+\lm(\widetilde{I_{e_2(\mathcal{I})+1}}/J\widetilde{I_{e_2(\mathcal{I})}})=e_0(\mathcal{I})-\lm(R/\widetilde{I_1})+1$. 
\end{proof}
\begin{corollary}
Let $(R,\m)$ be a two dimensional Cohen-Macaulay local ring and $I$ an $\m$-primary ideal. If $\widetilde r_J( { I})=e_2(I)+1\neq 1$ then $$1\leq e_2(I)\leq r_J(I)-1\leq  \lm(\widetilde I/I)+1.$$ Moreover, if $I$ is Ratliff-Rush closed then the following statements hold:
\begin{enumerate}[label=(\roman*)]
    \item \label{cor-roman-1} $r_J(I)=2$
    \item \label{cor-roman-2} $e_2(I)=1$
    \item \label{cor-roman-3} $\depth G(I)\geq 1$
\end{enumerate}
\end{corollary}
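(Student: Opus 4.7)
The plan is to combine three results from the excerpt: Proposition \ref{mafi}, the implications catalogued in Theorem \ref{lemma-bound-on-rtilde}, and Rossi's bound \eqref{rossi-bound}, and then deduce (iii) via a direct Ratliff-Rush closure argument rather than a separate depth criterion.

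First I would establish the three-term chain. The hypothesis $\widetilde r_J(I)=e_2(I)+1\neq 1$ immediately gives $e_2(I)\geq 1$ (using Narita's nonnegativity $e_2(I)\geq 0$ in dimension two), which settles the leftmost inequality. Proposition \ref{mafi} then yields $e_2(I)+1=\widetilde r_J(I)\leq r_J(I)$, i.e., $e_2(I)\leq r_J(I)-1$. For the rightmost inequality, since $e_2(I)\neq 0$, the equivalence (i)$\Longleftrightarrow$(iv) in Theorem \ref{lemma-bound-on-rtilde} applies and forces $e_1(I)=e_0(I)-\lm(R/\widetilde I)+1$. Substituting into Rossi's bound \eqref{rossi-bound} gives
$$r_J(I)\leq e_1(I)-e_0(I)+\lm(R/I)+1=\lm(\widetilde I/I)+2,$$
which is what we want.

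For the \emph{furthermore} part, assume $I=\widetilde I$, so $\lm(\widetilde I/I)=0$ collapses the chain to $1\leq e_2(I)\leq r_J(I)-1\leq 1$. This forces $e_2(I)=1$ and $r_J(I)=2$, proving (i) and (ii). For (iii), I would argue $\depth G(I)\geq 1$ by showing $I^n=\widetilde{I^n}$ for all $n\geq 1$, the standard characterisation. For $n\geq 3$, induction works cleanly: $\widetilde r_J(I)=r_J(I)=2$ yields $\widetilde{I^n}=J\widetilde{I^{n-1}}$ and $I^n=JI^{n-1}$, so $I^{n-1}=\widetilde{I^{n-1}}$ propagates to $I^n=\widetilde{I^n}$. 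The base case $I^2=\widetilde{I^2}$ is the crux: Theorem \ref{lemma-bound-on-rtilde}(iii) with $n=e_2(I)=1$ and $J\widetilde I=JI$ gives $\lm(\widetilde{I^2}/JI)=1$, while $r_J(I)=2$ forces $JI\subsetneq I^2$. Since $I^2$ is sandwiched as $JI\subsetneq I^2\subseteq\widetilde{I^2}$ with outer quotient length one, one concludes $I^2=\widetilde{I^2}$.

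The main obstacle is largely bookkeeping: the extremal value $\widetilde r_J(I)=e_2(I)+1$ triggers (iv) in Theorem \ref{lemma-bound-on-rtilde}, which controls $e_1(I)$, and hence through Rossi's bound controls $r_J(I)$. The one genuinely nonformal step is the length-sandwich $JI\subsetneq I^2\subseteq\widetilde{I^2}$ of quotient length $1$ used in (iii); this is where the Ratliff-Rush closed hypothesis $I=\widetilde I$ is essential, since it is what identifies $J\widetilde I$ with $JI$ and lets the quotient on the right be pinned down by Theorem \ref{lemma-bound-on-rtilde}(iii).
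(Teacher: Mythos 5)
Your proof is correct, and for the inequality chain and items (i)--(ii) it is the same argument as the paper's: combine $e_2(I)+1=\widetilde r_J(I)\leq r_J(I)$ from Proposition \ref{mafi} with Rossi's bound \eqref{rossi-bound}, substitute $e_1(I)=e_0(I)-\lm(R/\widetilde I)+1$ from statement (iv) of Theorem \ref{lemma-bound-on-rtilde} (valid since $e_2(I)\neq 0$ makes all four statements equivalent and (i) holds by hypothesis), and then set $\lm(\widetilde I/I)=0$ to collapse the chain. The one genuine divergence is item (iii): once $r_J(I)=2$ is in hand, the paper simply invokes \cite[Theorem 3.3]{rv}, whereas you prove $\depth G(I)\geq 1$ from scratch by establishing $I^n=\widetilde{I^n}$ for all $n\geq 1$. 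Your base case is sound --- statement (iii) of Theorem \ref{lemma-bound-on-rtilde} with $n=e_2(I)=1$ gives $\lm(\widetilde{I^2}/J\widetilde I)=1$, and since $I=\widetilde I$ identifies $J\widetilde I$ with $JI$ while $r_J(I)=2$ forces $JI\subsetneq I^2\subseteq\widetilde{I^2}$, the length-one quotient pins down $I^2=\widetilde{I^2}$ --- and the induction $\widetilde{I^n}=J\widetilde{I^{n-1}}=JI^{n-1}=I^n$ for $n\geq 3$ uses only $\widetilde r_J(I)=r_J(I)=2$. This makes your treatment of (iii) self-contained modulo the standard equivalence between $\depth G(I)\geq 1$ and Ratliff--Rush stability of all powers (which the paper itself records at the start of Section \ref{section-3}), at the cost of a few extra lines; the paper's citation buys brevity but imports an external Sally-type depth criterion. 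Both routes are valid.
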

\begin{proof}
 We have 
 \begin{eqnarray}\label{eq1}
    e_2(I)+1&=& \widetilde r_J(I)\nonumber\\
    &\leq & r_J(I) \hspace{1cm}(\mbox{by Proposition \ref{mafi}})\nonumber\\
    &\leq & e_1(I)-e_0(I)+\lm(R/I)+1 \mbox{ (by }\eqref{rossi-bound} )
 \end{eqnarray}
  By Theorem \ref{lemma-bound-on-rtilde}, $e_1(I)=e_0(I)-\lm(R/\widetilde I)+1$. Substituting the value in equation \eqref{eq1}, we get $e_2(I)+1\leq r_J(I)\leq \lm(\widetilde I/I)+2$ which implies 
 \begin{equation*}
    1\leq e_2(I)\leq r_J(I)-1 \leq \lm(\widetilde I/I)+1.
 \end{equation*} 
  Moreover if $I$ is Ratliff-Rush closed, then we obtain $e_2(I)=1$ and $r_J(I)=e_2(I)+1=2$. Then by \cite[Theorem 3.3]{rv} we have $\depth G(I)\geq 1$.
\end{proof}
\begin{corollary}\label{cor-n}
Let $(R,\m)$ be a two dimensional Cohen-Macaulay local ring, $I$ an $\m$-primary ideal and $\mathcal{I}=\{I_n\}$ an $I$-admissible filtration.
For a minimal reduction $J\subseteq I$, 
if $\widetilde r_J({\mathcal{I}})=e_2(\mathcal{I})$
then the following statements hold:
\begin{enumerate}[label=(\roman*)]
    \item \label{cor2-roman-1} $\widetilde{I_{n+1}}=J\widetilde{I_n}$ for  $n\neq0,1,e_2(\mathcal{I})-1$;
    \item \label{cor2-roman-2} $\lm(\widetilde{I_{n+1}}/J\widetilde{I_n})=\begin{cases} 
    2    \text{ for } n=1 \text{ if } e_2(\mathcal{I})=2,\\
    1    \text{ for } n=1,e_2(\mathcal{I})-1 \text{ if } e_2(\mathcal{I})\neq 2,
 \end{cases}$
    \item \label{cor2-roman-3}  $e_1(\mathcal{I})=e_0(\mathcal{I})-\lm(R/\widetilde{I_1})+2$.
\end{enumerate}
\end{corollary}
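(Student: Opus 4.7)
The plan is to translate the hypothesis $\widetilde r_J(\mathcal I)=e_2(\mathcal I)$ directly into constraints on the integers $v_n(\widetilde{\mathcal I}):=\lm(\widetilde{I_{n+1}}/J\widetilde{I_n})$, and then harvest the three conclusions from those constraints. The engine is the pair of identities used inside the proof of Theorem \ref{lemma-bound-on-rtilde}, namely
$$
e_2(\mathcal I)=\sum_{n\geq 0} n\,v_n(\widetilde{\mathcal I}),\qquad e_1(\mathcal I)=\sum_{n\geq 0} v_n(\widetilde{\mathcal I}),
$$
both of which hold because $\depth G(\widetilde{\mathcal I})\geq 1$ in dimension two. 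Setting $e:=e_2(\mathcal I)$ and $v_n:=v_n(\widetilde{\mathcal I})$, the hypothesis forces $v_n=0$ for all $n\geq e$ together with $v_{e-1}\geq 1$, so the first identity collapses to
$$
e=\sum_{n=1}^{e-1} n\,v_n.
$$

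From here I would do a short case analysis on $e$. The value $e=1$ is impossible: it would make the right-hand side zero. For $e=2$ the equation reads $2=v_1$, so $v_1=2$ and all higher $v_n$ vanish, which is exactly \ref{cor2-roman-1} and the first branch of \ref{cor2-roman-2}. For $e\geq 3$, isolating the top term gives $(e-1)v_{e-1}\leq e$, forcing $v_{e-1}=1$; the leftover equation $\sum_{n=1}^{e-2} n\,v_n=1$ then admits only the solution $v_1=1$ with $v_n=0$ for $2\leq n\leq e-2$, giving \ref{cor2-roman-1} and the second branch of \ref{cor2-roman-2}.

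For \ref{cor2-roman-3} I would just feed these values back into the formula for $e_1(\mathcal I)$. In the case $e=2$ we get $e_1(\mathcal I)=v_0+v_1=\lm(\widetilde{I_1}/J)+2$, and in the case $e\geq 3$ we get $e_1(\mathcal I)=v_0+v_1+v_{e-1}=\lm(\widetilde{I_1}/J)+2$. Since $J$ is a parameter ideal in a two-dimensional Cohen-Macaulay ring, $\lm(R/J)=e_0(J)=e_0(\mathcal I)$, hence $\lm(\widetilde{I_1}/J)=e_0(\mathcal I)-\lm(R/\widetilde{I_1})$, and \ref{cor2-roman-3} follows uniformly.

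The argument is essentially a bookkeeping exercise once the two identities are in hand; the only genuine step is the positivity argument $(e-1)v_{e-1}\leq e$ that pins down $v_{e-1}=1$ in the generic range $e\geq 3$, and the main thing to watch is to keep the $e=2$ case visibly separated so that the asymmetric value $v_1=2$ in \ref{cor2-roman-2} is not absorbed into the other branch.
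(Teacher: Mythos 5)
Your proposal is correct and follows essentially the same route as the paper: both reduce the statement to the identities $e_2(\mathcal I)=\sum_n n\,v_n(\widetilde{\mathcal I})$ and $e_1(\mathcal I)=\sum_n v_n(\widetilde{\mathcal I})$, split into the cases $e_2(\mathcal I)=2$ and $e_2(\mathcal I)\geq 3$, and pin down $v_{e_2-1}=1$ via the same inequality $2(e_2-1)>e_2\geq (e_2-1)v_{e_2-1}$.
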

\begin{proof}
 Note that $\widetilde r_J({\mathcal{I}})=e_2(\mathcal{I})$ if and only if $\widetilde{I_{n+1}}=J\widetilde{I_n}$ for all $n\geq e_2(\mathcal{I})$ and $\widetilde{I_{n+1}}\neq J\widetilde{I_n}$ for $n=e_2(\mathcal{I})-1$. Since $\depth G(\widetilde{\mathcal{I}})\geq 1,$ we have  $e_2(\mathcal{I})=\sum_{n=0 }^{e_2(\mathcal{I})-1}nv_n(\widetilde{\mathcal{I}})$. Therefore $e_2(\mathcal{I})\neq 1$.
 Now suppose $e_2(\mathcal{I})=2.$ Then $\lm(\widetilde{I_{2}}/J\widetilde{I_1})=2$ and $\widetilde{I_{n+1}}=J\widetilde{I_n}$ for $n\geq 2$. For the case $e_2(\mathcal{I})\geq 3$, we get
 $$2(e_2(\mathcal{I})-1)>e_2(\mathcal{I})\geq (e_2(\mathcal{I})-1)v_{e_2(\mathcal{I})-1}(\widetilde{\mathcal{I}}).$$
 So, $v_{e_2(\mathcal{I})-1}(\widetilde{\mathcal{I}})=1$. This gives $1=e_2(\mathcal{I})-(e_2(\mathcal{I})-1)v_{e_2(\mathcal{I})-1}(\widetilde{\mathcal{I}})= \sum_{n=0 }^{e_2(\mathcal{I})-2}nv_n(\widetilde{\mathcal{I}})$ which implies $\lm(\widetilde{I_{n+1}}/J\widetilde{I_n})=1$ for $n=1$ and $\widetilde{I_{n+1}}=J\widetilde{I_n}$ for all $2\leq n\leq e_2(\mathcal{I})-2.$ This proves \ref{cor2-roman-1}  and \ref{cor2-roman-2}.  
To see \ref{cor2-roman-3} , we have $e_1(\mathcal{I})=\sum_{n=0}^{e_2(\mathcal{I})-1}\lm(\widetilde{I_{n+1}}/J\widetilde{I_n})=
e_0(\mathcal{I})-\lm(R/\widetilde{I_1})+2.$ 
\end{proof}
\begin{example}\label{example-n}\cite[Theorem 5.2]{or}
Let $m\geq 0$, $d\geq 2$ and $k$ be an infinite field. Consider the power series ring $D=k[[\{X_j\}_{1\leq j\leq m}, Y, \{V_j\}_{1\leq j\leq d}, \{Z_j\}_{1\leq j\leq d}]]$ with $m+2d+1$ indeterminates and the ideal $\mathfrak{a}=[(X_j~|~1\leq j\leq m)+(Y)].[(X_j~|~1\leq j\leq m)+(Y)+(V_i~|~1\leq i\leq d)]+(V_iV_j~|~1\leq i,j\leq d, i\neq j)+(V_i^3-Z_iY~|~1\leq i\leq d).$ Define $R=D/\mathfrak{a}$ and $x_i,y,v_i,z_i$ denote the images of $X_i, Y, V_i, Z_i$ in $R$ respectively. Let $\mathfrak{m}=(x_j~|~1\leq j\leq m)+(y)+(v_j~|~1\leq j\leq d)+(z_j~|~1\leq j\leq d)$ be the maximal ideal in $R$ and $Q=(z_j~|~1\leq j\leq d).$ Then \begin{enumerate}
    \item $R$ is Cohen-Macaulay local ring with $\dim R=d,$
    \item $Q$ is a minimal reduction of $\m$ with $r_Q(\m)=3,$
    \item $e_0(\m)=m+2d+2;$ $e_1(\m)=m+3d+2$; $e_2(\m)=d+1$ and $e_i(\m)=0$ for $3\leq i\leq d,$ 
    \item $G(\m)$ is Buchsbaum ring with $\depth G(\m)=0$.
\end{enumerate} 
Particularly when $d=2$, we have $e_1(\m)=m+8$ and $e_0(\m)-\lm(R/\widetilde{\m})+2=m+7$ which implies $e_1(\m)\not= e_0(\m)-\lm(R/\widetilde{\m})+2$. Therefore by Corollary \ref{cor-n},  $\widetilde{r}_Q(\m)\neq e_2(\m)=3=r_Q(\m)$. Therefore $\widetilde{r}_Q(\m)< r_Q(\m).$
\end{example}

We end this section with the following questions.
\begin{question}
Is $r_J(\mathcal I)\leq e_1(\mathcal I)-e_0(\mathcal I)+\lm(R/I_1)+1$ for any $I$-admissible filtration in two dimensional Cohen-Macaulay local ring?
Since $\widetilde r_J( {\mathcal I})\leq r_J(\mathcal{I})$ by Proposition \ref{mafi}, one may investigate whether the same bound holds for $\widetilde r_J( {\mathcal I})$? 
\end{question}
\begin{question}
Is $\widetilde r_J( {\mathcal I})\leq r_J(\mathcal{I})$ for $d\geq 3$? 
\end{question}
\section{Rossi's bound  in dimension three}\label{section-3}
In general, reduction number does not behave well with respect to a superficial element $x\in I$, i.e., $r_{J/(x)}(I/(x))= r_J(I)$ may not hold. 
When $\depth G(I)\geq 1$, then  $r_{J/(x)}(I/(x))= r_J(I)$, see \cite[Lemma 2.14]{marley}.
However, there are examples when $r_{J/(x)}(I/(x))= r_J(I)$ and $\depth G(I)=0.$
Note that $\depth G(I)\geq 1$ is equivalent to the condition that $\widetilde{I^n}=I^n$  for all  $n\geq 1$.  In the lemma below, we state a necessary condition for $r_{J/(x)}(I/(x))<r_J(I)$.
\begin{lemma}\label{l1}
Let $(R,\m)$ be a Noetherian local ring of dimension $d\geq 1$ and $\depth R>0$. Let $I$ be  an $\m$ primary ideal and $J\subseteq I$  a minimal reduction of $I$. If $r_{J/(x)}(I/(x))< r_J(I)$ for a superficial element $x\in I$, then $\widetilde{I^n}\neq I^n$ for all $r_{J/(x)}(I/(x))\leq n < r_J(I)$.
\end{lemma}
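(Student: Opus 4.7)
The plan is to argue by contradiction: set $s := r_{J/(x)}(I/(x))$ and $r := r_J(I)$, fix $n \in [s, r-1]$, and suppose toward a contradiction that $\widetilde{I^n} = I^n$.

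First I would lift the quotient reduction identity back to $R$. The condition $n \geq s$ gives $(I/(x))^{n+1} = (J/(x))(I/(x))^n$, which reads $I^{n+1} + (x) = JI^n + (x)$ in $R$. Since $\depth R > 0$ and $x$ is superficial, $x$ is a nonzerodivisor, so $(x) \cap I^{n+1} = x(I^{n+1}:_R x)$, and therefore
\[
I^{n+1} = JI^n + x(I^{n+1}:_R x). \qquad (\star)
\]
Moreover, once $I^{m+1} = JI^m$ holds at some $m$ it propagates to all larger $m$ (multiplying by $I$ and using $IJ = JI$), so the supremum definition of $r$ forces $I^{n+1} \neq JI^n$ for every $n + 1 \leq r$.

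Next I would use the standard identification $\widetilde{I^n} = \bigcup_{k \geq 1}(I^{n+k} :_R x^k)$ valid for a superficial nonzerodivisor $x$ (the containment $\supseteq$ is immediate from $x^k \in I^k$; the containment $\subseteq$ follows since $(I^{m+1}:_R x) = I^m$ for $m \gg 0$). In particular $(I^{n+1}:_R x) \subseteq \widetilde{I^n}$, since any $a \in (I^{n+1}:_R x)$ satisfies $ax^k \in x^{k-1}I^{n+1} \subseteq I^{n+k}$ for every $k$. Under the contradiction hypothesis $\widetilde{I^n} = I^n$ this containment is an equality $(I^{n+1}:_R x) = I^n$, and plugging into $(\star)$ yields
\[
I^{n+1} = JI^n + xI^n. \qquad (\star\star)
\]

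Finally, to reach the contradiction $I^{n+1} = JI^n$ with what was shown in the first step, I need $xI^n \subseteq JI^n$. This is automatic when $x \in J$. In the general case $x \in I$, one reduces to $x \in J$ by first passing to a faithfully flat local extension $R \to R[t]_{\m R[t]}$ that enlarges the residue field, and then choosing a minimal reduction containing the image of $x$, with the verification that the invariants $r_J(I)$, $r_{J/(x)}(I/(x))$, and the Ratliff-Rush filtration are all preserved under this base change. Justifying the reduction-to-$x \in J$ step is the main obstacle of the argument; the remaining algebra leading to $(\star\star)$ and the final contradiction is formal.
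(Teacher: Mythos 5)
Your argument is essentially the paper's own proof. The paper likewise argues by contradiction: from $\widetilde{I^n}=I^n$ it gets $I^n\subseteq(I^{n+1}:x)\subseteq(\widetilde{I^{n+1}}:x)=\widetilde{I^n}=I^n$, hence $(I^{n+1}:x)=I^n$ and $(x)\cap I^{n+1}=xI^n$ (your route to $(I^{n+1}:x)\subseteq\widetilde{I^n}$ via $\widetilde{I^n}=\bigcup_k(I^{n+k}:x^k)$ is the same computation); it then lifts $(I/(x))^{n+1}=(J/(x))(I/(x))^n$ to $I^{n+1}\subseteq JI^n+\bigl((x)\cap I^{n+1}\bigr)=JI^n+xI^n=JI^n$ and contradicts $n<r_J(I)$. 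The only point where you depart is the worry about $x\in J$, and here you should note that the paper's proof needs exactly the same thing: its final equality $JI^n+xI^n=JI^n$ is precisely the inclusion $xI^n\subseteq JI^n$. This is a tacit hypothesis rather than a gap you must close: the notation $J/(x)$ for a minimal reduction of $I/(x)$ in the $(d-1)$-dimensional ring $R/(x)$ presupposes that $x$ is a minimal generator of $J$, and every application in the paper takes $J=(x,y,\dots)$. So the clean resolution is to read ``$x\in J$'' into the statement, not to prove the general case. Your proposed reduction to $x\in J$ would not succeed anyway: the lemma concerns a fixed $J$, and replacing $J$ by some other minimal reduction containing $x$ changes $r_J(I)$; no faithfully flat base change will move a given $x\in I\setminus J$ into the given $J$.
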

\begin{proof} 
Suppose $\widetilde{I^n}=I^n$ for some $n$ with  $r_{J/(x)}(I/(x))\leq n < r_J(I)$. Then  $I^n\subseteq (I^{n+1}:x)\subseteq (\widetilde{I^{n+1}}:x)=\widetilde{I^n}=I^n$. Thus $(I^{n+1}:x)=I^n$ which implies $I^{n+1}\cap (x)=xI^n$. On the other hand,  $(I/(x))^{n+1}=J/(x)(I/(x))^n$ which implies $I^{n+1}\subseteq JI^n+(x)$.   
Hence $I^{n+1}\subseteq JI^n+(x)\cap I^{n+1}=JI^n+xI^n=JI^n$. So $r_J(I)\leq n$ which is a contradiction.
\end{proof}
We define $$\rho(I)=\min\{i\geq 1|\widetilde{I^n}=I^n \text{ for all } n\geq i\}.$$ 
As an interesting application of Lemma \ref{l1}, we see that Rossi's bound holds in dimension three for those $\m$-primary ideals $I$ for which $\rho(I)\leq r_J(I)-1$.  
\begin{proposition}\label{p1}
Let $(R,\m)$ be a Cohen-Macaulay local ring of dimension $d=3$ and $I$ an $\m$ primary ideal. For a minimal reduction $J$ of $I$, if $\rho(I)\leq r_J(I)-1$, then $r_J(I)\leq e_1(I)-e_0(I)+\lm(R/I)+1.$
\end{proposition}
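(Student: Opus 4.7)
The plan is to pass to a two-dimensional quotient by a superficial element and then invoke Rossi's bound \eqref{rossi-bound}. After enlarging the residue field if necessary, pick $x\in I$ superficial for $I$ with respect to $J$; then $R/(x)$ is a two-dimensional Cohen-Macaulay local ring and $J/(x)$ is a minimal reduction of $I/(x)$.

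The crux is to show $r_{J/(x)}(I/(x)) = r_J(I)$. One direction is automatic: reducing the relations $I^{n+1}=JI^n$ for $n\geq r_J(I)$ modulo $x$ yields $(I/(x))^{n+1}=(J/(x))(I/(x))^n$, so $r_{J/(x)}(I/(x))\leq r_J(I)$. For the reverse, I argue by contradiction. Suppose $r_{J/(x)}(I/(x))<r_J(I)$. Then Lemma \ref{l1} forces $\widetilde{I^n}\neq I^n$ for every integer $n$ with $r_{J/(x)}(I/(x))\leq n<r_J(I)$. But the hypothesis $\rho(I)\leq r_J(I)-1$ gives $\widetilde{I^{r_J(I)-1}}=I^{r_J(I)-1}$; taking $n=r_J(I)-1$ yields a contradiction, so equality holds.

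With this in hand, Rossi's bound applied in the two-dimensional Cohen-Macaulay ring $R/(x)$ gives
\[ r_J(I)=r_{J/(x)}(I/(x))\leq e_1(I/(x))-e_0(I/(x))+\lm\bigl((R/(x))/(I/(x))\bigr)+1. \]
It remains to transfer the Hilbert data back to $R$. Since $x$ is superficial in a Cohen-Macaulay ring, $e_0(I/(x))=e_0(I)$, and the classical Huckaba-Marley formula
\[ e_1(I)-e_1(I/(x))=\sum_{n\geq 1}\lm\bigl((I^n:x)/I^{n-1}\bigr)\geq 0 \]
yields $e_1(I/(x))\leq e_1(I)$. Since $x\in I$, one also has $\lm((R/(x))/(I/(x)))=\lm(R/I)$. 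Chaining these estimates gives the asserted bound.

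The principal obstacle is the identity $r_J(I)=r_{J/(x)}(I/(x))$: in general the reduction number can strictly drop upon reducing modulo a superficial element, and the hypothesis $\rho(I)\leq r_J(I)-1$ together with Lemma \ref{l1} is tailor-made to exclude this drop. Once the reduction number is preserved, the descent of $e_0$ and $e_1$ along a superficial element in a Cohen-Macaulay ring is routine.
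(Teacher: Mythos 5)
Your proof is correct and follows essentially the same route as the paper: the hypothesis $\rho(I)\leq r_J(I)-1$ gives $\widetilde{I^{r_J(I)-1}}=I^{r_J(I)-1}$, which via Lemma \ref{l1} forces $r_{J/(x)}(I/(x))=r_J(I)$ for a superficial $x\in I$, and then Rossi's two-dimensional bound \eqref{rossi-bound} finishes the argument. The only quibble is in the transfer of $e_1$: for a superficial element in dimension $d\geq 2$ one in fact has the equality $e_1(I/(x))=e_1(I)$ (the colon-sum correction you quote affects the top coefficient $e_d$, not $e_1$), but since you only need $e_1(I/(x))\leq e_1(I)$ this misattribution does not affect the validity of the argument.
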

\begin{proof}
Let $x\in I$ be a superficial element.  Suppose $\rho(I)\leq r_J(I)-1$.  Then $I^{r_J(I)-1}=\widetilde{I^{r_J(I)-1}}$ which implies  $r_{J/(x)}(I/(x))=r_J(I)$ by Lemma \ref{l1}.  Now, using the bound in \eqref{rossi-bound}, we get that  $r_J(I)=r_{J/(x)}(I/(x))\leq e_1(I)-e_0(I)+\ell(R/I)+1$. 
\end{proof}
 The following examples show that $r_{J/(x)}(I/(x))=r_J(I)$ may hold even if $\depth G(I)=0.$
\begin{example}\cite[Example 3.3]{rv}
Let $R=Q[[x,y]]$ and $I=(x^4,x^3y,xy^3,y^4)$. Then $x^2y^2\in I^2:I \subseteq \widetilde{I}$ but $x^2y^2\notin I$ which implies $\depth G(I)=0$. Note that $J=(x^4,y^4)$ is a minimal reduction of $I$ and $p=x^4+y^4$ is superficial for $I$ as $e_0(I)=16=e_0(I/(p))$ and $e_1(I)=6=e_1(I/(p))$. Further,
 $r_J(I)=2=r_{J/(p)}(I/(p)).$
\end{example}
\begin{example}\label{examplerv}\cite[Example 3.8]{cpr}
Let $R=Q[[x,y,z]]$ and $I=(x^2-y^2,y^2-z^2,xy,yz,xz).$ Then $\depth G(I)=0$ as $x^2\in(I^2:I)\subseteq\widetilde{I}$ but $x^2\notin I$. Using Macaulay 2, 
we find that  $J=(\frac{23}{6}x^2+\frac{1}{2}xy-\frac{5}{2}y^2+\frac{4}{3}xz+\frac{1}{3}yz-\frac{4}{3}z^2,~\frac{23}{63}x^2+\frac{10}{3}xy-\frac{2}{9}y^2+xz+9yz-\frac{1}{7}z^2,~6x^2+\frac{5}{6}xy-5y^2+\frac{5}{4}xz+\frac{7}{6}yz-x^2)$ is a minimal reduction of $I$ and 
$e_0(I)=8=e_0(I/(p))$, $e_1(I)=4=e_1(I/(p))$ and $e_2(I)=0=e_2(I/(p))$, where $p=\frac{23}{6}x^2+\frac{1}{2}xy-\frac{5}{2}y^2+\frac{4}{3}xz+\frac{1}{3}yz-\frac{4}{3}z^2$. This shows that $p$ is a superficial element for $I$. Further, $r_J(I)=2=r_{J/(p)}(I/(p))$.
\end{example}
\begin{lemma}\label{general-bound}
Let $(R,\m)$ be a  Noetherian local ring of dimension $d\geq 2$ and $I$ an $\m$-primary ideal with $\depth G(I^t)>0$ for some $t\geq 1.$ Let $x\in I$ be a superficial element for $I$ and $J\subseteq I$ be a minimal reduction of $I$.  Then
 \begin{equation*}
 r_J(I)\leq r_{J/(x)}(I/(x))+t-1. 
\end{equation*}
 Furthermore, if $r_J(I)\equiv k \mod t$, $1\leq k\leq t-1$, then 
  \begin{equation}\label{2nd-eq-p} 
  r_J(I)\leq  r_{J/(x)}(I/(x)) +k-1.\end{equation}
\end{lemma}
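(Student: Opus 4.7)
The plan is to exploit the hypothesis $\depth G(I^t)>0$ through the element $x^t$, which is superficial for $I^t$ whenever $x$ is superficial for $I$ and which, under the depth hypothesis, is automatically a nonzerodivisor on $G(I^t)$. The idea is to feed this information into the inclusion $I^{n+1}\subseteq JI^n+(x)$ that is forced for $n\geq r':=r_{J/(x)}(I/(x))$ by the definition of the quotient reduction number.

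The proof would rest on two ingredients. First, from $\depth G(I^t)>0$ together with $x^t$ being superficial for $I^t$, the initial form $(x^t)^{*}\in G(I^t)_1$ is a nonzerodivisor on $G(I^t)$, which yields the colon identity $(I^{(m+1)t}:x^t)=I^{mt}$ for every $m\geq 0$. Second, from the definition of $r'$, one has $I^{n+1}=JI^n+x\cdot(I^{n+1}:x)$ for every $n\geq r'$. I would combine them as follows: for any $a\in(I^{n+1}:x)$, the element $ax^t=(ax)\cdot x^{t-1}$ lies in $I^{n+1}\cdot I^{t-1}\subseteq I^{n+t}$, so $(I^{n+1}:x)\subseteq(I^{n+t}:x^t)$. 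If $n$ is additionally a multiple of $t$, then $n+t$ is too, and the first ingredient collapses $(I^{n+t}:x^t)$ to $I^n$; consequently $x\cdot(I^{n+1}:x)\subseteq xI^n\subseteq JI^n$, so $I^{n+1}=JI^n$.

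A standard one-step propagation --- namely $I^{n+1}=JI^n$ implies $I^{n+2}=I\cdot JI^n=JI^{n+1}$ --- shows that once this equality holds at one index it persists, so $r_J(I)$ is bounded above by the smallest multiple of $t$ that is at least $r'$, namely $t\lceil r'/t\rceil\leq r'+t-1$. This proves the first inequality. For the refinement, write $r_J(I)=qt+k$ with $1\leq k\leq t-1$ and suppose for contradiction that $r_J(I)>r'+k-1$, equivalently $r'\leq qt$; then $n=qt$ is a multiple of $t$ that lies above $r'$, so the previous step forces $I^{qt+1}=JI^{qt}$, contradicting $r_J(I)=qt+k>qt$. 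Hence $r_J(I)\leq r'+k-1$.

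The step I expect to be most delicate is the justification of the colon identity $(I^{(m+1)t}:x^t)=I^{mt}$ for \emph{every} $m\geq 0$, rather than merely for $m\gg 0$; this is precisely where positive depth --- not just superficiality --- is essential, in order to rule out the finite-length graded submodule of $G(I^t)$ on which $(x^t)^{*}$ could otherwise fail to be injective. Everything after this point is ideal-theoretic bookkeeping.
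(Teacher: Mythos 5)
Your proof is correct, and it reaches the same structural conclusion (control of $I^{n+1}$ at indices $n$ divisible by $t$, then the mod-$t$ bookkeeping) by a different mechanism than the paper. The paper routes everything through its Lemma \ref{l1}: if $r_{J/(x)}(I/(x))<r_J(I)$ then $\widetilde{I^n}\neq I^n$ throughout the intermediate range, which is proved using the Ratliff--Rush identities $(\widetilde{I^{n+1}}:x)=\widetilde{I^n}$ together with $\widetilde{I^{mt}}=I^{mt}$ (a consequence of $\depth G(I^t)>0$); the contradiction is then obtained exactly as in your residue argument. You instead obtain the key containment $(I^{mt+1}:x)\subseteq I^{mt}$ directly from the $G(I^t)$-regularity of $(x^t)^{*}$ via $(I^{mt+1}:x)\subseteq(I^{(m+1)t}:x^t)=I^{mt}$, bypassing Ratliff--Rush closures entirely. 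Both arguments rest on standard facts that you should cite rather than just assert: that $x^t$ is superficial for $I^t$ when $x$ is superficial for $I$, and that a superficial element whose associated graded ring has positive depth has regular initial form, equivalently $(I^{(m+1)t}:x^t)=I^{mt}$ for \emph{all} $m\geq 0$ (your own flagged ``delicate step''); these are in Rossi--Valla or Puthenpurakal. What your route buys is self-containedness at this point of the argument; what the paper's route buys is that Lemma \ref{l1} is stated once in the generality of Noetherian local rings with $\depth R>0$ and is reused elsewhere (e.g.\ Proposition \ref{p1} and the $d\geq 4$ reduction in Theorem \ref{proposition-new-bound-1}), so the Ratliff--Rush formulation earns its keep across the paper. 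One small point to make explicit in your write-up: $x\in J$ (implicit in the notation $J/(x)$) is what lets you absorb $xI^{mt}$ into $JI^{mt}$.
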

\begin{proof}
Since $\depth G(I^t)>0$, we have $\depth R>0$. We first consider the case when $r_J(I)\equiv k \mod t$ for $1\leq k\leq t-1$ and prove \eqref{2nd-eq-p}. Suppose $r_J(I)=mt+k$ for $m\geq 0$. We claim that $r_J(I)<r_{J/(x)}(I/(x))+ k$. Suppose $r_{J/(x)}(I/(x))\leq r_J(I)-k=mt$, then $r_{J/(x)}(I/(x))\leq  mt< r_J(I)$, but $\widetilde{I^{mt}}=I^{mt}$ as $\depth G(I^t)>0$.  Then by Lemma \ref{l1}, $r_J(I)=r_{J/(x)}(I/(x))$, a contradiction. Therefore, $$r_J(I)\leq r_{J/(x)}(I/(x))+ k-1.$$ 

Next, let $k=0$, i.e., $r_J(I)=mt$, $m\geq 1$. Then $r_J(I)<r_{J/(x)}(I/(x))+t$. Otherwise, $r_{J/(x)}(I/(x))\leq r_J(I)-t=(m-1)t<mt=r_J(I)$ and again $\widetilde{I^{(m-1)t}}=I^{(m-1)t}$ as $\depth G(I^t)>0$. Then by Lemma \ref{l1}, $r_J(I)=r_{J/(x)}(I/(x))$, a contradiction. Therefore, $$r_J(I)\leq r_{J/(x)}(I/(x))+t-1.$$
\end{proof}
We now generalize Rossi's result for $d=3$ case. Note that when $t=1$ in the result below, we obtain the $I$-adic case of \cite[Theorem 4.3]{rv} in dimension three.
\begin{theorem}\label{general-bound-three-dimension}
Let $(R,\m)$ be a Cohen-Macaulay local ring of dimension $d=3$ and $I$ an $\m$-primary ideal with $\depth G(I^t)>0$ for some $t\geq 1.$ Let $J\subseteq I$ be a minimal reduction of $I$.  Then
 \begin{equation*}\label{1st-eqn}r_J(I)\leq e_1(I)-e_0(I)+\lm(R/I)+t.\end{equation*}
 Furthermore, if $r_J(I)\equiv k \mod t$, $1\leq k\leq t-1$, then 
  \begin{equation*}\label{2nd-eq} r_J(I)\leq e_1(I)-e_0(I)+\lm(R/I)+k.\end{equation*}
\end{theorem}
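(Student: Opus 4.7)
The plan is to reduce the problem to the two-dimensional case by killing a superficial element and then invoke Rossi's inequality \eqref{rossi-bound}.

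First, after the standard faithfully flat base change $R \to R[X]_{\m R[X]}$, which preserves $\depth G(I^t)$ and all Hilbert coefficients, I may assume the residue field $R/\m$ is infinite. Choose $x \in I$ superficial for $I$; since $R$ is Cohen-Macaulay of dimension three, $x$ is $R$-regular, so $R/(x)$ is a two-dimensional Cohen-Macaulay local ring. Because $\depth G(I^t) > 0$, Lemma \ref{general-bound} applies and yields
$$r_J(I) \leq r_{J/(x)}(I/(x)) + t - 1,$$
with the sharper inequality $r_J(I) \leq r_{J/(x)}(I/(x)) + k - 1$ in the congruence case $r_J(I) \equiv k \mod t$, $1 \leq k \leq t - 1$.

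Next, $J/(x)$ is a minimal reduction of $I/(x)$ in the two-dimensional Cohen-Macaulay local ring $R/(x)$. By the standard superficial-element comparison, $e_i(I/(x)) = e_i(I)$ for $i = 0, 1$, and since $x \in I$ we have $(R/(x))/(I/(x)) \cong R/I$, so the two colengths coincide. Applying Rossi's bound \eqref{rossi-bound} in dimension two gives
$$r_{J/(x)}(I/(x)) \leq e_1(I) - e_0(I) + \lm(R/I) + 1.$$
Substituting this into the previous display produces both of the desired bounds.

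I do not anticipate a genuine obstacle: the content of the argument sits in Lemma \ref{l1} and Lemma \ref{general-bound}, which already encode how $r_J$ can fail to descend through a superficial element and quantify the defect in terms of $t$, together with the known two-dimensional bound of Rossi. The rest is the routine verification of the transfer of the low-degree Hilbert coefficients and of the colength under the regular quotient.
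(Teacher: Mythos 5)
Your proposal is correct and follows exactly the paper's own argument: pass to $R/(x)$ for a superficial $x\in I$, apply Lemma \ref{general-bound} to control the defect in the reduction number, and then invoke Rossi's bound \eqref{rossi-bound} in dimension two together with the invariance of $e_0$, $e_1$ and the colength. The extra remarks on enlarging the residue field and on the regularity of $x$ are standard details the paper leaves implicit.
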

\begin{proof}
Let $x\in I$ be a superficial element for $I$ and let $\ov R=R/(x)$. Then $\ov R$ is a two dimensional Cohen-Macaulay local ring.
By Lemma \ref{general-bound} and the bound in \eqref{rossi-bound}, we have  
\begin{eqnarray*}{}
r_J(I)& \leq &r_{J/(x)}(I/(x))+t-1\\
&\leq & e_1(I/(x))-e_0(I/(x))+\lm(R/(I+(x)))+t \\
& = & e_1(I)-e_0(I)+\lm(R/I)+t.
\end{eqnarray*}
When $r_J(I)\equiv k \mod t$, $1\leq k\leq t-1$, we have  $r_J(I) \leq r_{J/(x)}(I/(x))+k-1\leq  e_1(I)-e_0(I)+\lm(R/I)+k $ from \eqref{2nd-eq-p}.
\end{proof}
\begin{corollary}\label{cor-n-0}
Let $(R,\m)$ be a Cohen-Macaulay local ring of dimension $d=3$ and $I$ an $\m$-primary ideal. Let $J\subseteq I$ be a minimal reduction of $I$. Suppose $\depth G(I^2)>0$ and $r_J(I)$ is odd. Then
 $$r_J(I)\leq e_1(I)-e_0(I)+\lm(R/I)+1.$$
\end{corollary}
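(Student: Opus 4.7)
The statement is an immediate specialization of Theorem \ref{general-bound-three-dimension}, so the plan is essentially a one-line application of that theorem with the parameter $t = 2$. Specifically, since $\depth G(I^2) > 0$, the hypothesis of Theorem \ref{general-bound-three-dimension} is satisfied with $t = 2$. The assumption that $r_J(I)$ is odd means $r_J(I) \equiv 1 \pmod{2}$, so we are in the "Furthermore" case with $k = 1$ (which indeed satisfies $1 \leq k \leq t-1 = 1$).

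Applying the second inequality in Theorem \ref{general-bound-three-dimension} directly yields
\[
r_J(I) \leq e_1(I) - e_0(I) + \lm(R/I) + 1,
\]
which is the desired bound.

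There is no real obstacle here since the work has already been done in Theorem \ref{general-bound-three-dimension}; the only thing to verify is that the parity hypothesis on $r_J(I)$ matches the congruence condition needed to invoke the sharper bound with $k = 1$ instead of the weaker bound with $t = 2$. In other words, the role of the oddness assumption is precisely to rule out the case $r_J(I) \equiv 0 \pmod{2}$, which would otherwise only yield $r_J(I) \leq e_1(I) - e_0(I) + \lm(R/I) + 2$.
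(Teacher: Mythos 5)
Your proposal is correct and coincides with the paper's own proof: the corollary is obtained by invoking Theorem \ref{general-bound-three-dimension} with $t=2$, noting that oddness of $r_J(I)$ gives $r_J(I)\equiv 1 \pmod 2$, i.e.\ $k=1$ in the sharper inequality. Nothing further is needed.
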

\begin{proof}
Since $\depth G(I^2)>0$ and $r_J(I)\equiv 1\mod 2$, the conclusion follows from Theorem \ref{general-bound-three-dimension}. 
\end{proof}
 For a graded ring $S$, let $H^i_{S_+}(S)$ denote the i-th local
cohomology module of $S$ with support in the graded ideal $S_+$ of elements of positive degree and set $a_i(S) = \max\{n~|~ H^i_{S_+}(S)_n \not= 0\}$ with the convention $a_i(S) = -\infty$ if $H^i_{S_+}(S) = 0.$ 
\begin{corollary}\label{cor-n-2}
Let $(R,\m)$ be a Cohen-Macaulay local ring of dimension $d=3$ and $I$ an $\m$-primary ideal. Let $J\subseteq I$ be a minimal reduction of $I$.  Then
 \begin{equation*}\label{1st-eqn-n}r_J(I)\leq\begin{cases} e_1(I)-e_0(I)+\lm(R/I)+1 & \text{ if } a_1(G(I))\leq 0\\
 e_1(I)-e_0(I)+\lm(R/I)+a_1(G(I))+1 & \text{ if } a_1(G(I))> 0
 \end{cases}\end{equation*}
\end{corollary}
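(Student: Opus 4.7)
The plan is to invoke Theorem~\ref{general-bound-three-dimension} with the smallest $t \geq 1$ for which $\depth G(I^t) > 0$. If one can show that $t = \max\{1,\, a_1(G(I))+1\}$ always suffices, then Theorem~\ref{general-bound-three-dimension} immediately yields
\[
r_J(I) \leq e_1(I) - e_0(I) + \lm(R/I) + t,
\]
which is precisely the claimed dichotomy: $t = 1$ when $a_1(G(I)) \leq 0$, and $t = a_1(G(I)) + 1$ when $a_1(G(I)) > 0$.

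The real content is therefore the implication: if $t \geq \max\{1,\, a_1(G(I))+1\}$, then $\depth G(I^t) > 0$. The strategy is to route this through the Ratliff--Rush stability index $\rho(I) = \min\{i \geq 1 \mid \widetilde{I^n} = I^n \text{ for all } n \geq i\}$, which has already appeared in Lemma~\ref{l1} and Proposition~\ref{p1}. Using the standard identification
\[
[H^0_{G(I)_+}(G(I))]_n \cong (I^n \cap \widetilde{I^{n+1}})/I^{n+1},
\]
together with a parallel description of $[H^1_{G(I)_+}(G(I))]_n$ extracted from the long exact sequence in local cohomology applied to the short exact sequence $0 \to I\R(I) \to \R(I) \to R \to 0$ of graded $\R(I)$-modules, I expect to obtain the inequality $\rho(I) \leq \max\{1,\, a_1(G(I))+1\}$. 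Once this is in place, a short verification shows that $\widetilde{I^m} = I^m$ for every $m \geq t$ forces the Ratliff--Rush closure of $(I^t)^n$ with respect to $I^t$ to coincide with $(I^t)^n$ for all $n \geq 1$, which by the characterization of positive depth is equivalent to $\depth G(I^t) > 0$. Combined with Theorem~\ref{general-bound-three-dimension}, the bound follows.

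The main obstacle is precisely this comparison between $\rho(I)$ and $a_1(G(I))$. The connecting homomorphisms in the long exact sequence must be tracked with care, and one must identify how the vanishing of $[H^1_{G(I)_+}(G(I))]_n$ above degree $a_1(G(I))$ forces the Ratliff--Rush defect $\widetilde{I^{n+1}}/I^{n+1}$ to vanish in the same range. This is the step where a classical result on local cohomology of blow-up algebras (in the tradition of Hoa, Trung, or Marley) is most naturally invoked rather than proved from scratch, and it is the technical heart of the corollary.
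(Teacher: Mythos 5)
Your proposal follows essentially the same route as the paper: reduce to showing $\rho(I)\leq\max\{a_1(G(I))+1,1\}$, note that $\depth G(I^{\rho(I)})>0$, and then apply Theorem~\ref{general-bound-three-dimension} with $t=\rho(I)$. The inequality you identify as the technical heart is exactly \cite[Theorem 4.3]{Put1}, which the paper simply cites rather than reproving via the local cohomology of the Rees algebra, so your instinct to invoke it from the literature is the right one.
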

\begin{proof}
Since  $\depth G(I^{\rho(I)})>0$, we can put $t=\rho(I)$ in Theorem \ref{general-bound-three-dimension} and $\rho(I)\leq \max\{a_1(G(I)+1,1\}$ by \cite[Theorem 4.3]{Put1}. 
\end{proof}

In \cite{itoh} Itoh proved that $e_2(\m)\geq e_1(\m)-e_0(\m)+1$. If $e_2(\m)=e_1(\m)-e_0(\m)+1\neq 0$, then $\tpe(R)\geq e_0(\m)-\mu(\m)+d-1$ where $\tpe(R)=\dim_k \Ext_R^d(k,R)$ and $\mu(\m)=\lm(\m/\m^2)$, see \cite{tm}. In \cite{tm}, the authors also proved that if $e_2(\m)=e_1(\m)-e_0(\m)+1\neq 0$ and $\tpe(R)= e_0(\m)-\mu(\m)+d-1$, then $G(\m)$ is Cohen-Macaulay. Therefore Rossi's bound as given by  \eqref{rossi-bound} holds for $r_J(\m)$ in this case. We consider the next boundary case, i.e., $\tpe(R)= e_0(\m)-\mu(\m)+d.$ In the corollary below, we obtain a linear bound in this case as well. 
\begin{corollary}\label{cor-n-1}
Let $(R,\m)$ be a Cohen-Macaulay local ring of dimension $d=3$ with $e_2(\m)=e_1(\m)-e_0(\m)+1\neq 0$ and $\tpe R=e_0(\m)-\mu(\m)+d$. Suppose $J\subseteq \m$ is a minimal reduction of $\m$. Then $r_J(\m)\leq e_1(\m)-e_0(\m)+\lm(R/\m)+3.$
\end{corollary}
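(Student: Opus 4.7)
The plan is to reduce this statement to results already proved in Section \ref{section-3} by extracting a depth condition on a suitable power of $\m$ from the structural results in \cite{tm} for this sub-boundary case. In the previously cited result, the first boundary case $\tpe(R) = e_0(\m) - \mu(\m) + d - 1$ forces $G(\m)$ to be Cohen-Macaulay; the natural next step, matching the ``$+3$'' in the target inequality, is a statement of the form $\depth G(\m^3) > 0$ (equivalently, $\rho(\m) \leq 3$, or $a_1(G(\m)) \leq 2$). So the first thing I would do is locate exactly such a statement in \cite{tm} — or, if it is not stated in that form, rephrase what is there using Sally-type lifting arguments for the boundary case $e_2(\m) = e_1(\m) - e_0(\m) + 1$, together with the Sally module / Ratliff--Rush machinery in dimension three.

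Assuming the depth condition $\depth G(\m^3) > 0$ is in hand, the conclusion is a one-line invocation of Theorem \ref{general-bound-three-dimension} applied with $I = \m$ and $t = 3$, giving
\[ r_J(\m) \leq e_1(\m) - e_0(\m) + \lm(R/\m) + 3. \]
If instead the cited structural result is phrased cohomologically as $a_1(G(\m)) \leq 2$, one invokes Corollary \ref{cor-n-2} and obtains exactly the same bound. Either route gives the desired inequality without any further computation.

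The main obstacle is therefore not algebraic but bibliographic: pinning down the precise form of the result from \cite{tm} in the $\tpe(R) = e_0(\m) - \mu(\m) + d$ case and verifying that it is sharp enough to yield $t = 3$ rather than some larger $t$. If \cite{tm} only supplies a weaker depth bound (for instance via $\depth G(\m) \geq d - 3$), then I would instead combine it with Theorem \ref{theorem-intro-2} and show that the quadratic correction in $e_2(\m)$ simplifies, using $e_2(\m) = e_1(\m) - e_0(\m) + 1$, to something that can be absorbed into the $+3$ term — but this is a fallback, as the direct route through Theorem \ref{general-bound-three-dimension} is cleaner and is exactly why that theorem is positioned just above this corollary.
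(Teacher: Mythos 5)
Your proposal is correct and follows essentially the same route as the paper: the paper's proof invokes \cite[Theorem 4.2]{tm} to get $\widetilde{\m^j}=\m^j$ for $j\geq 3$ (i.e.\ $\depth G(\m^3)>0$, exactly the form you anticipated) and then applies Theorem \ref{general-bound-three-dimension} with $t=3$, after first disposing of the case $\depth G(\m)\geq 1$ via \cite[Theorem 4.3]{rv}. The only step you left open --- pinning down the precise statement in \cite{tm} --- resolves exactly as you predicted.
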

\begin{proof}
If $\depth G(\m)\geq 1,$ then the conclusion follows from \cite[Theorem 4.3]{rv}.
Suppose $\depth G(\m)=0.$ By \cite[Theorem 4.2]{tm}, $\widetilde{\m^j}=\m^j$ for $j\geq 3$ which implies $\depth G(\m^3)>0.$ Then by Theorem \ref{general-bound-three-dimension}, $r_J(\m)\leq e_1(\m)-e_0(\m)+\lm(R/\m)+3$.
\end{proof}

We now consider Example \ref{example-n} with $m=0$ and $d=3$ to demonstrate that the bound in Theorem \ref{general-bound-three-dimension} is better than the one given by Vasconcelos in \eqref{theorem-intro-old1}. 
\begin{example}\label{exfromtm}
Let $R=k[[x,y,z,u,v,w,t]]/(t^2,tu,tv,tw,uv,uw,vw,u^3-xt,v^3-yt,w^3-zt)$. Then $R$ is a Cohen-Macaulay local ring of dimension $3$ and $\depth G(\m)=0.$ We have $e_0(\m)=8$, $e_1(\m)=11$, $e_2(\m)=4$, $e_3(\m)=0$ and $\type(R)=e_0(\m)-\mu(\m)+3=4$, see \cite[Example 5.2(1)]{tm}. By \cite[Theorem 4.2]{tm}, we have $\m^2\neq \widetilde{\m^2}$ and $\m^j= \widetilde{\m^j}$ for $j\geq 3$. Therefore $\depth G(\m^3)\geq 1$. 
Now $J=(x,y,z)$ is a minimal reduction of $\m$ and $r_J(\m)=3\leq e_1(\m)-e_0(\m)+\lm(R/\m)+3=7.$ 
Note that the bound $\frac{de_0(\m)}{o(\m)}-2d+1=3.8-6+1=19$ given by Vasconcelos  in \cite{v}  is larger than our bound.  
\end{example}


In the next proposition, we summarize the cases when Rossi's bound  holds in dimension three. Some of these results are already known. 
Let $v_n(\mathcal I)=\lm(I_{n+1}/JI_n)$ for any $I$ admissible filtration $\mathcal I$ and $\mathcal F=\{\widetilde{I^n}\}_{n\in\mathbb{Z}}$ denote the Ratliff-Rush filtration. By the proof of Rossi's result \cite[Theorem 1.3]{r} in a $d$ dimensional Cohen-Macaulay local ring,  we have 
\begin{equation}\label{main-equation-for-bound}
    r_J(I)\leq \sum_{n\geq 0}v_n(\mathcal F)-e_0(I)+\lm(R/I)+1
    \end{equation}
    The idea in the next few results is to approximate the term $\sum_{n\geq 0}v_n(\mathcal F).$
\begin{proposition}\label{prop-few-cond-for-rossi-bound}
Let $(R,\m)$ be a three dimensional Cohen-Macaulay local ring, $I$ an $\m$ primary ideal and $J$ a minimal reduction of $I$. Then $r_J(I)\leq e_1(I)-e_0(I)+\lm(R/I)+1$ if one of the following conditions hold:
 \begin{enumerate}[label=(\roman*)]
     \item \label{part-1} $\depth G(\mathcal F)\geq 2.$ 
     \item \label{part-3} $e_2(I)=e_3(I)=0.$ 
     \item $e_2(I)=0$ and $I$ is asymptotically normal .
     \item $e_2(I)=0$ and $G(I)$ is generalized Cohen-Macaulay. 
     \item $\rho(I)\leq r_J(I)-1.$
     \item $a_1(G(I))\leq 0.$
 \end{enumerate}
\end{proposition}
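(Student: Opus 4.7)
The unifying strategy is to start from \eqref{main-equation-for-bound},
$$r_J(I)\leq \sum_{n\geq 0} v_n(\mathcal{F})-e_0(I)+\lm(R/I)+1,$$
and in each case establish $\sum_{n\geq 0} v_n(\mathcal{F})\leq e_1(I)$; substituting this into the above then yields precisely the Rossi-type bound. Parts (v) and (vi) require no new argument: (v) is exactly Proposition \ref{p1} and (vi) is Corollary \ref{cor-n-2}, so cross-references suffice.

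For (i), the hypothesis $\depth G(\mathcal{F})\geq 2=d-1$ is precisely the condition under which the Huckaba--Marley--Rossi--Valla equality $e_1(\mathcal{F})=\sum_{n\geq 0} v_n(\mathcal{F})$ holds for an admissible filtration in a $d$-dimensional Cohen--Macaulay local ring. Since the Ratliff--Rush closure preserves the first Hilbert coefficient, $e_1(\mathcal{F})=e_1(I)$, so the desired inequality holds with equality and the claim follows from \eqref{main-equation-for-bound}.

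For (ii)--(iv), the plan is to reduce to case (i) by proving $\depth G(\mathcal{F})\geq 2$ under each hypothesis. For (iii), asymptotic normality gives $\widetilde{I^n}=\ov{I^n}$ for $n\gg 0$, so the Ratliff--Rush filtration agrees with the integral closure filtration from some index on; Itoh's theorem on normal filtrations with $e_2=0$ in a Cohen--Macaulay ring then yields $G(\mathcal{F})$ Cohen--Macaulay, hence $\depth G(\mathcal{F})=d\geq 2$. For (iv), the generalized Cohen--Macaulay hypothesis on $G(I)$ forces $H^i_{G_+}(G(I))$ to have finite length for $i<d$; combined with $e_2(I)=0$, one can feed this into the Rossi--Valla machinery (via the usual identification of $\depth G(\mathcal{F})$ with the vanishing order of the graded local cohomology of the Rees algebra) to deduce $\depth G(\mathcal{F})\geq 2$.

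The main obstacle will be (ii). The hypotheses $e_2(I)=e_3(I)=0$ must be translated into $\depth G(\mathcal{F})\geq 2$. My plan is to pass to $\ov R=R/(x)$ for a superficial element $x\in I$ (where the two-dimensional Rossi bound \eqref{rossi-bound} already holds) and analyze the difference formulas relating $e_i(\mathcal{F})$ to $e_i(\ov{\mathcal{F}})$; the subtle point is that these differences involve Sally-type correction modules that vanish only in the presence of enough depth. I expect to combine Puthenpurakal's nonnegativity $e_3(\mathcal{F})\geq 0$ with the equality conditions coming from $e_2(I)=0$ to force all intermediate corrections to vanish, thereby lifting $\depth G(\mathcal{F})$ to at least $d-1$ and reducing (ii) to case (i).
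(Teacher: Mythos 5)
Your treatment of (i), (v) and (vi) is correct and is exactly the paper's: (i) follows from the Huckaba--Marley equality $e_1(\mathcal F)=\sum_{n\geq 0}v_n(\mathcal F)$ when $\depth G(\mathcal F)\geq d-1$ (\cite[Proposition 4.6]{hm}) substituted into \eqref{main-equation-for-bound}, and (v), (vi) are just Proposition \ref{p1} and Corollary \ref{cor-n-2}. The gap is in (ii)--(iv), which you correctly identify as the heart of the matter but do not actually prove. For (ii) you only announce a plan (``I expect to combine\dots''), and the plan rests on a false premise: there is no general nonnegativity $e_3(\mathcal F)\geq 0$ for the Ratliff--Rush filtration --- on the contrary, Puthenpurakal proves that $e_2(I)=0$ forces $e_3(I)\leq 0$ (\cite[Proposition 6.4]{Put2}), so the inequality you want to exploit points the wrong way. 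The missing ingredient, which the paper uses and which cannot be recovered from the Rossi--Valla formalism alone, is \cite[Theorem 6.2]{Put2}: $e_2(I)=e_3(I)=0$ implies $G(\mathcal F)$ is Cohen--Macaulay; with that in hand (ii) reduces to (i) immediately, and (iii), (iv) reduce to (ii) by showing $e_3(I)=0$ (for (iii) via $e_3(I)\geq 0$ for asymptotically normal ideals \cite[Theorem 4.1]{cpr} against $e_3(I)\leq 0$; for (iv) via the equivalence, again from \cite[Proposition 6.4]{Put2}, that under $e_2(I)=0$ one has $e_3(I)=0$ if and only if $G(I)$ is generalized Cohen--Macaulay).

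Your alternative route for (iii) also has a concrete flaw: asymptotic normality gives $\widetilde{I^n}=\ov{I^n}$ only for $n\gg 0$, so even if Itoh's theorem yields Cohen--Macaulayness of the associated graded ring of the \emph{normal} filtration, this does not transfer to $\depth G(\mathcal F)\geq 2$, since two filtrations agreeing only in large degrees can have associated graded rings of different depths. The paper's detour through the numerical identity $e_3(I)=0$ avoids exactly this issue. In short, the architecture of your argument (reduce everything to case (i)) is compatible with the paper's, but the load-bearing steps for (ii)--(iv) are absent and the substitute heuristics you propose would not close them.
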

\begin{proof}
\begin{enumerate}[label=(\roman*)]
    \item 
As $\depth G(\mathcal F)\geq 2$,  $e_1(I)=e_1(\mathcal{F})= \displaystyle{\sum_{n\geq 0}v_n(\mathcal F)}$ by \cite[Proposition 4.6]{hm}. Substituting this into (\ref{main-equation-for-bound}), we get $r_J(I)\leq e_1(I)-e_0(I)+\lm(R/I)+1$.

\item 
If $e_2(I)=e_3(I)=0$, then $G(\mathcal F)$ is Cohen-Macaulay by \cite[Theorem 6.2]{Put2}) and hence the conclusion follows from part \ref{part-1}. 
\item By \cite[Theorem 4.1]{cpr}, $e_3(I)\geq 0$ for an asymptotically normal ideal $I$ and by \cite[Proposition 6.4]{Put2}, $e_2(I)=0$ implies $e_3(I)\leq 0$. This gives $e_3(I)=0$. Now the conclusion follows from part \ref{part-3}. 
\item 
 Suppose $e_2(I)=0$. Then $e_3(I)=0$ if and only if $G(I)$  is generalized Cohen-Macaulay by \cite[Proposition 6.4]{Put2}. Now, the conclusion follows from part \ref{part-3}.
\item It follows from Proposition \ref{p1}. 
\item It follows from Corollary \ref{cor-n-2}.
\end{enumerate}
\end{proof}
\begin{remark}
\begin{enumerate}
    \item Note that in Example \ref{exfromtm},  $G(\widetilde{\m^n})=\mathop\oplus\limits_{n\geq 0} \widetilde{\m^n}/\widetilde{\m^{n+1}} $ is Cohen-Macaulay by \cite[Theorem 4.2]{tm}. Hence by Proposition \ref{prop-few-cond-for-rossi-bound}\ref{part-1}, we have $3=r_J(\m)\leq e_1(\m)-e_0(\m)+\lm(R/\m)+1=5$. 
    \item In Example \ref{examplerv}, we have  $e_2(I)=e_3(I)=0$. Hence by Proposition \ref{prop-few-cond-for-rossi-bound}\ref{part-3},  $2=r_J(I)\leq e_1(I)-e_0(I)+\lm(R/I)+1=2$. 
\end{enumerate}
\end{remark}
Next we give an upper bound for the reduction number of an ideal in Buchsbaum local ring with dimension at most two.  
\begin{theorem}\label{1dim-Buchsbaum}
Let $(R,\m)$ be a one dimensional Buchsbaum local ring and $I$ an $\m$-primary ideal. Let $J$ be a  minimal reduction of $I$, then
$$r_J(I)\leq e_1(I)-e_1(J)-e_0(I)+\lm(R/I)+2.$$
\end{theorem}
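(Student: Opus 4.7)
The plan is to reduce to the one-dimensional Cohen--Macaulay case by passing to $\ov R := R/W$, where $W := H^0_\m(R)$, and then invoking Rossi's bound (\ref{rossi-bound}). Since $R$ is Buchsbaum of dimension one, the defining identity $\m W = 0$ forces $W = (0:\m) = (0:\m^2)$, and $\ov R$ is Cohen--Macaulay of dimension one. I will assume $R/\m$ infinite and write $J = (x)$ for a superficial $x \in I$; note that $IW \subseteq \m W = 0$ and that $x$ is a parameter.

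The first technical step is the vanishing $W \cap xR = 0$. If $w = xr \in W \cap xR$, then $\m w = 0$ gives $x\,\m r = 0$, so $\m r \subseteq (0:x) \subseteq H^0_\m(R) = W$ (the inclusion $(0:x) \subseteq W$ uses that $(x)$ is $\m$-primary). Thus $\m^2 r \subseteq \m W = 0$, forcing $r \in (0:\m^2) = W$ and hence $w = xr \in xW = 0$. In particular $W \cap J^n = 0$ for all $n \geq 1$. Using the short exact sequences $0 \to W/(W \cap I^n) \to R/I^n \to \ov R/\ov I^n \to 0$ together with $W \cap I^n = 0$ for $n \gg 0$, I would deduce $e_0(\ov I) = e_0(I)$, $e_1(\ov I) = e_1(I) + \lm(W)$, and $\lm(\ov R/\ov I) = \lm(R/I) - \lm(W) + \lm(W \cap I)$. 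Applying the same comparison to $J$, together with the vanishing $W \cap J^n = 0$, gives $e_1(J) = -\lm(W)$, so that $e_1(\ov I) = e_1(I) - e_1(J)$.

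The second step transfers the reduction number from $\ov R$ to $R$. For $n \geq \ov r := r_{\ov J}(\ov I)$ the equality $\ov I^{n+1} = \ov J\,\ov I^n$ lifts to $I^{n+1} \subseteq JI^n + W$; multiplying by $I$ and using $IW = 0$ yields $I^{n+2} \subseteq JI^{n+1} + IW = JI^{n+1}$, whence $I^{n+2} = JI^{n+1}$. Therefore $r_J(I) \leq \ov r + 1$. Rossi's bound applied to the Cohen--Macaulay ring $\ov R$ gives $\ov r \leq e_1(\ov I) - e_0(\ov I) + \lm(\ov R/\ov I) + 1$; substituting the identities from the previous step and invoking $\lm(W \cap I) \leq \lm(W) = -e_1(J)$ yields the desired bound $r_J(I) \leq e_1(I) - e_1(J) - e_0(I) + \lm(R/I) + 2$.

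The principal obstacle is the vanishing $W \cap xR = 0$, which relies essentially on the Buchsbaum identity $(0:\m^2) = W$; this vanishing is what makes $\ov R$ tight enough to recover $e_1(J)$ from $\lm(W)$ and, coupled with $IW = 0$, enables the crucial ``boost by one'' in the transfer $r_J(I) \leq \ov r + 1$ that produces the final $+2$ in the bound.
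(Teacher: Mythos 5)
Your proposal is correct and follows essentially the same route as the paper: pass to $S=R/H^0_{\m}(R)$, use $IH^0_{\m}(R)=0$ to get $r_J(I)\leq r_{JS}(IS)+1$, and then apply Rossi's bound \eqref{rossi-bound} to the one-dimensional Cohen--Macaulay ring $S$. The only difference is that you derive the comparisons $e_0(IS)=e_0(I)$, $e_1(IS)=e_1(I)-e_1(J)$ and $\lm(S/IS)\leq\lm(R/I)$ by hand (via $W\cap xR=0$ and $e_1(J)=-\lm(W)$), where the paper simply cites \cite[Lemma 2.3, Proposition 2.3]{rv}.
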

\begin{proof}
 Let $S=R/H^0_{\m}(R)$. Let $r=r_{JS}(IS)$. Then $I^{r+1}-JI^r\subseteq H^0_{\m}(R)$, which implies that $I^{r+2}-JI^{r+1}\subseteq IH^0_{\m}(R)=0$. Hence 
 \begin{equation}\label{reduction}
     r_J(I)\leq r_{JS}(IS)+1.
 \end{equation}
  Note that $S$ is a 1 dimensional Cohen-Macaulay local ring. Therefore, by \eqref{rossi-bound}, we have
 \begin{eqnarray}
  r_J(I)&\leq& r_{JS}(IS)+1\nonumber \\ &\leq& e_1(IS)-e_0(IS)+\lm(S/IS)+2 \nonumber \\
 &\leq & e_1(I)-e_1(J)-e_0(I)+\lm(R/I)+2 \mbox{ (by \cite[Lemma 2.3, Proposition 2.3]{rv})}\nonumber \label{buchsbaum}
 \end{eqnarray}
 \end{proof}
\begin{theorem}\label{2dim-Buchsbaum}
Let $(R,\m)$ be a two dimensional Buchsbaum local ring and $I$ an $\m$-primary ideal. Let $J$ be a minimal reduction of $I$ and $\depth G(I^t)>0$ for some $t\geq 1$, then
$$r_J(I)\leq e_1(I)-e_1(J)-e_0(I)+\lm(R/I)+t+1$$
\end{theorem}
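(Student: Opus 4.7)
The plan is to mirror the argument of Theorem \ref{1dim-Buchsbaum} combined with the dimension-reduction of Lemma \ref{general-bound}. First I would set $S = R/H^0_\m(R)$; since $R$ is Buchsbaum, $\m H^0_\m(R) = 0$, so in particular $IH^0_\m(R)=0$. The identical calculation that establishes \eqref{reduction} then applies verbatim: if $r = r_{JS}(IS)$ then $I^{r+1}-JI^r\subseteq H^0_\m(R)$, and multiplying by $I$ kills the $H^0_\m$ part, yielding
\[ r_J(I)\leq r_{JS}(IS)+1. \]
At this stage $\dim S = 2$, $\depth S\geq 1$, and the hypothesis $\depth G(I^t)>0$ descends to $\depth G((IS)^t)>0$, since an element witnessing positive depth in $G(I^t)$ remains a nonzerodivisor after quotienting by the finite-length ideal $H^0_\m(R)$.

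Next I would bound $r_{JS}(IS)$ by reducing to dimension one inside $S$. Choose $x\in I$ superficial for $I$; it is a nonzerodivisor on $S$. By Lemma \ref{general-bound} applied to $S$,
\[ r_{JS}(IS)\leq r_{J(S/xS)}(I(S/xS))+t-1. \]
Writing $\bar S = S/xS$, which is one-dimensional and Cohen-Macaulay in the main case $H^1_\m(R)=0$, Rossi's bound \eqref{rossi-bound} yields $r_{J\bar S}(I\bar S)\leq e_1(I\bar S)-e_0(I\bar S)+\lm(\bar S/I\bar S)+1$. Combined with the standard superficial-element identities $e_i(IS)=e_i(I\bar S)$ for $i=0,1$ and $\lm(\bar S/I\bar S)=\lm(S/IS)$, this gives
\[ r_{JS}(IS)\leq e_1(IS)-e_0(IS)+\lm(S/IS)+t. \]

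Combining both steps produces $r_J(I)\leq e_1(IS)-e_0(IS)+\lm(S/IS)+t+1$, and the Buchsbaum Hilbert-coefficient identities from \cite[Lemma 2.3, Proposition 2.3]{rv} (exactly as in Theorem \ref{1dim-Buchsbaum}) translate this to the claimed bound $r_J(I)\leq e_1(I)-e_1(J)-e_0(I)+\lm(R/I)+t+1$. The hard part will be the case $H^1_\m(R)\neq 0$, in which $\bar S$ fails to be Cohen-Macaulay: the long exact sequence attached to $0\to S\xrightarrow{x} S\to \bar S\to 0$, together with $\m H^1_\m(S)=0$, shows $H^0_\m(\bar S)\cong H^1_\m(S)\neq 0$, so $\bar S$ is genuinely one-dimensional Buchsbaum. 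Theorem \ref{1dim-Buchsbaum} applied to $\bar S$ then yields the looser constant $+2$ in place of $+1$, and one must absorb the extra unit by a sharper treatment---either by exploiting the inherited Buchsbaum property on $\bar S$ to improve the one-dimensional reduction, or by refining the passage $R\to S$ so that the initial loss drops below $+1$ when $\depth S=1$. This last accounting is where the main difficulty lies.
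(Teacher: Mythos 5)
Your strategy is essentially the paper's (reduce by a superficial element via Lemma \ref{general-bound}, then invoke the one-dimensional result), but as written the proof is incomplete: you explicitly leave open the case $H^1_\m(R)\neq 0$, where $\bar S=S/xS$ is only Buchsbaum rather than Cohen--Macaulay, and you do not supply the ``sharper treatment'' needed to absorb the extra unit. That unresolved case is a genuine gap under your accounting --- but the difficulty is entirely self-inflicted. The hypothesis $\depth G(I^t)>0$ forces $\depth R>0$ (the paper records this in the first line of its proof), so $H^0_\m(R)=0$ and $S=R$: your opening reduction $r_J(I)\leq r_{JS}(IS)+1$ is vacuous, and the $+1$ you pay for it is a phantom cost that you then struggle to recover downstream.

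Once that step is dropped, the budget closes with no case split. Lemma \ref{general-bound} gives $r_J(I)\leq r_{J/(x)}(I/(x))+t-1$ for a superficial $x\in I$; the quotient $R/(x)$ is a one-dimensional Buchsbaum local ring (possibly of depth zero, since $H^0_\m(R/(x))$ receives $H^1_\m(R)$, exactly as you observe); and Theorem \ref{1dim-Buchsbaum} applied to $R/(x)$ --- with its constant $+2$ --- yields $r_{J/(x)}(I/(x))\leq e_1(I/(x))-e_1(J/(x))-e_0(I/(x))+\lm(R/I)+2$. The superficial-element identities for the Hilbert coefficients then give the claimed bound with constant $(t-1)+2=t+1$. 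This is precisely the paper's argument; neither Rossi's Cohen--Macaulay bound on $\bar S$ nor any refinement of the passage $R\to S$ is needed.
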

\begin{proof}
Note that $\depth R>0$ as $\depth G(I^t)>0$. 
Let $x\in I$ be a superficial element for $I$. Then by Lemma \ref{general-bound}, we have $r_J(I)\leq r_{J/(x)}(I/(x))+t-1$. Since $R/(x)$ is a one dimensional Buchsbaum local ring, by Theorem \ref{1dim-Buchsbaum} we have 
\begin{eqnarray*}
r_{J/(x)}(I/(x))&\leq & e_1(I/(x))-e_1(J/(x))-e_0(I/(x))+\lm(R/I)+2\\
& = & e_1(I)-e_1(J)-e_0(I)+\lm(R/I)+2
\end{eqnarray*}
Therefore
$r_J(I)\leq e_1(I)-e_1(J)-e_0(I)+\lm(R/I)+t+1.$
\end{proof}
\section{Bound for $r_J(I)$ in dimension three}\label{section-4}
In this section we give a different upper bound for reduction number of $I$ in  a Cohen-Macaulay local ring of dimension $d\geq 3$  when $\depth G(I)\geq d-3$. Our bound involves $e_2(I)$ and $e_3(I)$. For an $I$ admissible filtration $\mathcal I=\{I_n\}_{n\in\mathbb{Z}}$, let us denote by $\mathcal R(\mathcal I)=\mathop\bigoplus\limits_{n\geq 0}I_n$ the Rees algebra of $\mathcal I$.  The second Hilbert function of $\mathcal I$ is defined as $H^2_{\mathcal I}(n)=\mathop\sum\limits_{i=0}^n H_{\mathcal I}(i)$ and the second Hilbert polynomial, denoted by $P^2_{\mathcal I}(n)$ is the polynomial which coincides with $H^2_{\mathcal I}(n)$ for large values of $n$. It is well known that the Hilbert series of $\mathcal I$, defined as $H(\mathcal I,z)=\sum_{n\geq 0} \lm(I_n/I_{n+1})z^n$, is rational, i.e., there exists a unique rational polynomial $h_{\mathcal I}(z)\in\mathbb{Q}[z]$ with $h_{\mathcal I}(1)\neq 0$ such that $$H(\mathcal I,z)=\frac{h_{\mathcal I}(z)}{(1-z)^d}.$$ For every $i\geq 0$, we define $e_i(\mathcal I)=\frac{h_{\mathcal I}^{(i)}(1)}{i!}$, where $h_{\mathcal I}^{(i)}(1)$ denotes the $i$-th formal derivative of the polynomial $h_{\mathcal I}(z)$ at $z=1$. The integers $e_i(\mathcal I)$ are called the Hilbert coefficients of  $\mathcal I$ and for $0\leq i\leq d$, these are same as defined earlier in the Introduction, see \cite{gr} for more details.

Let us recall the modified Koszul complex in dimension two defined in \cite{marley} as follows:
$$C_.({\mathcal I},n): ~~~~~ 0\longrightarrow R/{{ I}_{n-2}} \overset{\begin{psmallmatrix} -y \\ ~x \end{psmallmatrix}}\longrightarrow ( R/{ I}_{n-1})^2 \overset{( x, y)}\longrightarrow R/{I}_n\longrightarrow 0,$$ where $(x,y)$ is a minimal reduction of $I$. Let $H_i(C_.(\mathcal I,n))$ denote the $i$-th homology module of the complex $C_.(\mathcal I,n)$. The relation between the homology of this complex and Hilbert coefficients is used in the proof of the next theorem. For a numerical function $f:\mathbb  Z\longrightarrow  \mathbb Z$, we put $\triangle f(n) =f(n+1)-f(n)$ and recursively we can define $\triangle^i f(n):=\triangle(\triangle^{i-1} f(n))$ for all $i\geq 1$.
\begin{theorem}\label{proposition-new-bound-1}
Let $(R,\m)$ be a  Cohen-Macaulay local ring of dimension $d\geq 3$ and $I$  an $\m$- primary ideal  with $\depth G(I)\geq d-3$. Then \begin{equation}\label{new-bound1}
    r_J(I)\leq e_1(I)-e_0(I)+\lm(R/I)+1+(e_2(I)-1)e_2(I)-e_3(I).
\end{equation}
\end{theorem}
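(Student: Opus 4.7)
My plan is to reduce to dimension three via Sally's machine, invoke Rossi's inequality for the Ratliff-Rush filtration, pass to a two-dimensional quotient, and finally extract the quadratic correction $(e_2(I)-1)e_2(I)-e_3(I)$ using the modified Koszul complex recalled just before the theorem.

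First, since $\depth G(I)\ge d-3$, Sally's machine furnishes a superficial $R$-regular sequence $x_1,\dots,x_{d-3}\in I$ modulo which the Hilbert coefficients $e_0,e_1,e_2,e_3$ and the reduction number $r_J(I)$ are all preserved, so I may assume $d=3$. Applying \eqref{main-equation-for-bound} to the Ratliff-Rush filtration $\F=\{\widetilde{I^n}\}$ then reduces the task to proving
\[
\sum_{n\ge 0} v_n(\F)\ \le\ e_1(I)+(e_2(I)-1)e_2(I)-e_3(I).
\]

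Next I would pass to dimension two. Choose $x\in J$ that is superficial for $\F$; since $\depth G(\F)\ge 1$, the identity $(x)\cap\widetilde{I^{n+1}}=x\widetilde{I^n}$ yields $v_n(\F)=v_n(\bar\F)$, where $\bar\F=\{(\widetilde{I^n}+(x))/(x)\}$ lives over the two-dimensional Cohen-Macaulay ring $\bar R=R/(x)$. Let $\mathcal{H}=\widetilde{\bar\F}$ and $u_n=\lm(\widetilde{\bar I_n}/\bar I_n)$. Since $\depth G(\mathcal{H})\ge 1$, \cite[Theorem 2.5]{rv} in dimension two gives $\sum_n v_n(\mathcal{H})=e_1(\mathcal{H})=e_1(I)$, while a direct comparison of $\bar I_{n+1}/\bar J\bar I_n$ with $\widetilde{\bar I_{n+1}}/\bar J\widetilde{\bar I_n}$, exploiting that $\bar J$ is a regular sequence of length two, produces an estimate of the shape
\[
\sum_{n\ge 0} v_n(\bar\F)\ \le\ e_1(I)+\sum_{n\ge 1} u_n,
\]
and the whole problem reduces to bounding the Ratliff-Rush defect $\sum_{n\ge 1} u_n$.

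For the final step I would apply the modified Koszul complex $C_{\cdot}(\bar\F,n)$ with minimal reduction $\bar J$: its Euler-characteristic identity $\triangle^2\lm(R/\bar I_n)=\lm H_0-\lm H_1+\lm H_2$, summed against the weights $1$, $n$, $\binom{n}{2}$, recovers $e_1(\bar\F),e_2(\bar\F)=e_2(I)$ and $e_3(\bar\F)=e_3(I)$ in terms of $\sum\lm H_i$-type data, and in particular expresses each $u_n$ through $\lm H_1$ and $\lm H_2$. By Theorem \ref{lemma-bound-on-rtilde} the defects $u_n$ are supported on at most $e_2(I)$ indices and each is itself of size at most $e_2(I)$, while the $H_2$-contribution in the weighted sum packages exactly into $-e_3(I)$, producing $\sum_{n\ge 1} u_n\le (e_2(I)-1)e_2(I)-e_3(I)$. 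The main obstacle is precisely this final accounting: a naive multiplication of "defect at most $e_2$" by "support of size at most $e_2$" only yields $(e_2+1)e_2$, and getting the sharp $(e_2-1)e_2-e_3$ requires carefully separating the $H_1$ and $H_2$ contributions of the modified Koszul complex and exploiting $\depth G(\F)\ge 1$ to eliminate the boundary terms at $n=0,1$.
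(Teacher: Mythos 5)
Your reduction to $d=3$ and the first half of your argument track the paper's own proof: you invoke \eqref{main-equation-for-bound} for the Ratliff--Rush filtration $\mathcal F=\{\widetilde{I^n}\}$, pass to $\ov R=R/(x)$ using $v_n(\mathcal F)=v_n(\ov{\mathcal F})$, and use the modified Koszul complex to arrive at $\sum_n v_n(\ov{\mathcal F})\leq e_1(I)+\sum_{n} u_n$ with $u_n=\lm(\widetilde{\ov{\mathcal F}}_n/\ov{\mathcal F}_n)$; since $\sum_{n}u_n=e_3(\widetilde{\ov{\mathcal F}})-e_3(\ov{\mathcal F})=e_3(\widetilde{\ov{\mathcal F}})-e_3(I)$, this is exactly the paper's intermediate inequality \eqref{3}. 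The genuine gap is the last step, which you yourself flag as ``the main obstacle'': you never prove $e_3(\widetilde{\ov{\mathcal F}})\leq (e_2(I)-1)e_2(I)$, and the mechanisms you gesture at would not deliver it. Theorem \ref{lemma-bound-on-rtilde} bounds $\widetilde r_J$, i.e.\ the largest $n$ with $\widetilde{\ov{\mathcal F}}_{n}\neq \ov J\,\widetilde{\ov{\mathcal F}}_{n-1}$; it says nothing directly about how many indices have $u_n\neq 0$, nor about the size of any single $u_n$, so ``support of size at most $e_2$ times defect at most $e_2$'' is unjustified even in the weak form $(e_2+1)e_2$ you concede it would give. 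Likewise, ``separating the $H_1$ and $H_2$ contributions'' of $C_.(\ov{\mathcal F},n)$ cannot manufacture the factor $e_2(I)-1$: the $-e_3(I)$ in the statement is already accounted for by the identity $\sum_n u_n=e_3(\widetilde{\ov{\mathcal F}})-e_3(I)$, and what remains to be controlled is purely the term $e_3(\widetilde{\ov{\mathcal F}})$.

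The paper's argument for this remaining step is cohomological and is the real content of the theorem. By Blancafort's difference formula, $P_{\widetilde{\ov{\mathcal F}}}(n)-H_{\widetilde{\ov{\mathcal F}}}(n)=\lm\big((H^2_{\mathcal R_+}(\mathcal R(\widetilde{\ov{\mathcal F}})))_{n+1}\big)$ for all $n\geq -1$; summing yields $e_3(\widetilde{\ov{\mathcal F}})=\sum_{n\geq 0}\lm\big((H^2_{\mathcal R_+}(\mathcal R(\widetilde{\ov{\mathcal F}})))_{n+1}\big)$, these lengths are non-increasing in the degree, and evaluating the formula at $n=-1$ identifies the degree-zero length as $e_2(I)$. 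Hence $e_3(\widetilde{\ov{\mathcal F}})\leq a_2(\mathcal R(\widetilde{\ov{\mathcal F}}))\,e_2(I)\leq a_2(G(\widetilde{\ov{\mathcal F}}))\,e_2(I)$, and Marley's formula $a_2(G(\widetilde{\ov{\mathcal F}}))=r(\widetilde{\ov{\mathcal F}})-2$ combined with Theorem \ref{lemma-bound-on-rtilde} (applied to the filtration $\widetilde{\ov{\mathcal F}}$, for which $\widetilde r_J=r_J$) gives $a_2(G(\widetilde{\ov{\mathcal F}}))\leq e_2(I)-1$, exactly as in \eqref{e3tilde} and \eqref{ainvariant}. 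Without this chain, or a substitute for it, your proof stops at \eqref{3} and does not reach the stated bound.
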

\begin{proof}
Suppose $d=3$. Let $\mathcal F=\{\widetilde{I^n}\}$, $x\in I$ be a superficial element for $I$ and $J=(x,y,z)$ a minimal reduction of $I$. Then $x$ is also superficial for the filtration $\mathcal F$. Let $\ov R=R/(x)$ and $\ov{\mathcal F}=\{\ov{\mathcal F}_n=\frac{\widetilde{I^n}+(x)}{(x)}\}$. Since $\depth G(\mathcal F)\geq 1$, we have $v_n(\mathcal F)=v_n(\ov{\mathcal F})$.
By the proof of \cite[Proposition 2.9]{h}, we have 
\begin{eqnarray}
e_1(\ov{\mathcal F})&=&\sum_{n\geq 1}\triangle^2(P_{\ov{\mathcal F}}(n)-H_{\ov{\mathcal F}}(n))  \nonumber \\
&=& \sum_{n\geq 1}\Big(e_0(\ov{\mathcal F})-\sum_{i= 0}^2 (-1)^i \lm(H_i(C_.(\ov{\mathcal F},n)))\Big)   \mbox{ (by \cite[Proposition 3.2]{marley})}\nonumber \\
&=& \sum_{n\geq 0}v_n(\ov{\mathcal F})-\sum_{n\geq 1}\lm(H_2(C_.(\ov{\mathcal F},n))) \mbox{ (by the proof of \cite[Theorem 3.6]{marley})}.\label{eq}
\end{eqnarray}
Since $x$ is a superficial element for $\mathcal F$,  $e_1(I)=e_1(\mathcal F)=e_1(\ov{\mathcal F})$.
Therefore, by using \eqref{main-equation-for-bound} and (\ref{eq}), we get 
\begin{equation}\label{general-stat2}
    r_J(I)\leq e_1(I)+\sum_{n\geq 1}\lm(H_2(C_.(\ov{\mathcal F},n)))-e_0(I)+\lm(R/I)+1.
    \end{equation}
From the modified Koszul complex $C_.(\ov{\mathcal F},n)$, we have  $H_2(C.(\ov{\mathcal F},n))=\frac{\ov{\mathcal F}_{n-1}:(\ov y,\ov z)}{\ov{\mathcal F}_{n-2}}$.
Since $\ov{\mathcal F}_{n-1}:(\ov y,\ov z)\subseteq \widetilde{\ov{\mathcal F}}_{n-2}$,  $$\lm(H_2(C.(\ov{\mathcal F},n))\leq \lm\left (\frac{\widetilde{\ov{\mathcal F}}_{n-2}}{\ov{\mathcal F}_{n-2}}\right ).$$
Therefore,  for large $m$ we have
\begin{eqnarray*}
0\leq \sum_{n= 0}^m \lm(H_2(C.(\ov{\mathcal F},n)) &\leq & \sum_{n= 0}^m \lm \left(\frac{\widetilde{\ov{\mathcal F}}_{n-2}}{\ov{\mathcal F}_{n-2}}\right )\\
&=&   \sum_{n= 0}^m \lm(\ov R/\ov{\mathcal F}_{n-2})-\sum_{n=0}^m\lm(\ov R/\widetilde{\ov{\mathcal F}}_{n-2})\\
&=&  e_3(\widetilde{\ov{\mathcal F}}) -e_3(\ov{\mathcal F})\\
&=&   e_3(\widetilde{\ov{\mathcal F}}) -e_3(\mathcal F) ~~\mbox{ (by \cite[Proposition 1.5]{gr})}\\
&=&  e_3(\widetilde{\ov{\mathcal F}}) -e_3(I)
\end{eqnarray*}
This gives
\begin{equation}\label{e3}
0\leq \sum_{n\geq 0}\lm(H_2(C_.(\ov{\mathcal F},n)))\leq e_3(\widetilde{\ov {\mathcal F}})-e_3(I).
\end{equation}
From (\ref{general-stat2}) and (\ref{e3}), we get
\begin{equation}\label{3}
r_J(I)\leq e_1(I)-e_0(I)+\lm(R/I)+1+e_3(\widetilde{\ov{\mathcal F}})-e_3(I).
\end{equation}
By the difference formula in \cite[Proposition 4.4]{blanc},  we have
for all $n\geq -1$,
\begin{equation}\label{diff}
P_{\widetilde{\ov{\mathcal F}}}(n)-H_{\widetilde{{\ov{\mathcal F}}}} (n)=\lm((H^2_{\mathcal R_+}(\mathcal R(\widetilde{{\ov {\mathcal F}}})))_{n+1}). 
\end{equation}
Now taking sum for large $m$ on both sides of the above equation, we get
\begin{eqnarray*}
\sum_{n=0}^m \lm((H^2_{\mathcal R_+}(\mathcal R(\widetilde{{\ov {\mathcal F}}})))_{n+1})&=& \sum_{n=0}^m P_{\widetilde{\ov{\mathcal F}}}(n)-\sum_{n=0}^m H_{\widetilde{{\ov{\mathcal F}}}} (n)\\
&=& \sum_{n=0}^m P_{\widetilde{\ov{\mathcal F}}}(n)- H^2_{\widetilde{{\ov{\mathcal F}}}} (m)
\\
&=& e_0(\widetilde{{\ov{\mathcal F}}}){m+3\choose 3}-e_1(\widetilde{{\ov{\mathcal F}}}){m+2\choose 2}+e_2(\widetilde{{\ov{\mathcal F}}}){m+1\choose 1}- P^2_{\widetilde{{\ov{\mathcal F}}}} (m)\\
&=& e_3(\widetilde{\ov{\mathcal F}}).
\end{eqnarray*}
As $\ov R$ is a $2$-dimensional Cohen-Macaulay local ring,  we have $\lm((H^2_{\mathcal R_+}(\mathcal R(\widetilde{{\ov {\mathcal F}}})))_n)\leq \lm((H^2_{\mathcal R_+}(\mathcal R(\widetilde{{\ov {\mathcal F}}})))_{n-1})$ for all $n\in \mathbb Z$ by \cite[Lemma 4.7]{blanc}. Now in equation $(\ref{diff})$, we substitute $n=-1$ to get $$\lm((H^2_{\mathcal R_+}(\mathcal R(\widetilde{{\ov {\mathcal F}}})))_0)=e_2(\widetilde{{\ov {\mathcal F}}})=e_2(\ov{\mathcal F})=e_2(\mathcal F)=e_2(I)$$
Therefore, 
\begin{equation}\label{e3tilde}
e_3(\widetilde{\ov {\mathcal F}})=\sum_{n=0}^m \lm((H^2_{\mathcal R_+}(\mathcal R(\widetilde{{\ov {\mathcal F}}})))_{n+1})\leq\sum_{n=0}^ {a_2(\mathcal R(\widetilde{\ov {\mathcal F}}))-1} \lm((H^2_{\mathcal R_+}(\mathcal R(\widetilde{{\ov {\mathcal F}}})))_0)= a_2(\mathcal R(\widetilde{\ov {\mathcal F}}))e_2(I)
\end{equation}
where $a_2(\mathcal R(\widetilde{\ov {\mathcal F}}))\leq a_2(G(\widetilde{\ov {\mathcal F}}))=s$ (say). 
Now using (\ref{3}) and (\ref{e3tilde}), we have 
\begin{equation}\label{general-state3}
r_J(I)\leq e_1(I)-e_0(I)+\lm(R/I)+1+se_2(I)-e_3(I).
\end{equation}
By \cite[Corollary 5.7(2)]{marley}, we have $s=a_2(G(\widetilde{\ov {\mathcal F}}))=r(\widetilde{\ov {\mathcal F}})-2$ and 
by Theorem \ref{lemma-bound-on-rtilde}, $r_J(\widetilde{\ov{\mathcal{F}}})-1\leq  e_2(\ov{\mathcal{F}})$. This gives  
\begin{equation}\label{ainvariant}
s=r_J(\widetilde{\ov{\mathcal{F}}})-2\leq e_2(\ov{\mathcal{F}})-1= e_2(I)-1. 
\end{equation}
Now by (\ref{general-state3}) and (\ref{ainvariant}),  we get the conclusion. 

Suppose $d\geq 4.$ Let $x\in I$ be a superficial element for $I$. Then $e_i(I/(x))=e_i(I)$ for $0\leq i\leq 3$. Also, $\depth G(I)\geq 1$ implies $I^n=\widetilde{I^n}$ for $n\geq 1.$ This gives  $r_{J/(x)}(I/(x))=r_J(I)$ by Lemma \ref{l1}. This completes the proof. 
\end{proof}
\begin{example}
\begin{enumerate}
    \item 
We refer to Example \ref{exfromtm} to note that our bound $r_J(\m)\leq e_1(\m)-e_0(\m)+\lm(R/\m)+1+(e_2(\m)-1)e_2(\m)-e_3(\m)=17$ is better than Vasconcelos' bound $\frac{d e_0(\m)}{o(\m)}-2d+1=19.$ 
\item Example \ref{example-n} provides a number of three dimensional Cohen-Macaulay local rings with $e_1(\m)-e_0(\m)+\lm(R/\m)+1+(e_2(\m)-1)e_2(\m)-e_3(\m)=17$ and
$\frac{3 e_0(\m)}{o(\m)}-2.3+1=3m+19$, where $m\geq 0$.

\item  Let $R=Q[|x,y,z|]$ and $I = (x^4,y^4,z^4,x^3y,xy^3,y^3z,yz^3)$. Note that $\depth G(I)=0$ as $x^2y^2z^2\in I^2:I\subseteq \widetilde{I}$ but $x^2y^2z^2\notin I$. By \cite[Example 3.7]{mn},  $e_0(I)=64$, $e_1(I)=48$, $e_2(I)=4$ and $e_3(I)=0$. Using Macaulay 2, $J=(\frac{3}{4}x^4+x^3y+\frac{5}{6}xy^3+\frac{1}{2}y^4+5y^3z+3yz^3+\frac{1}{5}z^4,~\frac{4}{3}x^4+\frac{5}{3}x^3y+\frac{1}{2}xy^3+\frac{3}{2}y^4+\frac{4}{5}y^3z+\frac{5}{3}yz^3+\frac{10}{7}z^4,~\frac{5}{3}x^4+\frac{4}{5}x^3y+\frac{7}{2}xy^3+y^4+\frac{3}{2}y^3z+\frac{8}{9}yz^3+\frac{3}{4}z^4)$ is a  minimal reduction of $I$ and $3=r_J(I)\leq e_1(I)-e_0(I)+\lm(R/I)+1+(e_2(I)-1)e_2(I)-e_3(I)=32$ whereas $\frac{d e_0(I)}{o(I)}-2d+1=43.$
\end{enumerate}
\end{example}
Next we show that in dimension three, for certain values of $e_2(I)$, we get  linear upper bound on $r(I)$ in terms of Hilbert coefficients. We write $v_n$ for $v_n(\widetilde{\ov{\mathcal F}})$. 
\begin{corollary}\label{corr-main-result-1}
Let $(R,\m)$ be a three dimensional Cohen-Macaulay local ring and $I$ an $\m$-primary ideal. Then the following statements hold.
\begin{enumerate}
    \item If $e_2(I)=0$ or $1$ then 
$r(I)\leq e_1(I)-e_0(I)+\lm(R/I)+1-e_3(I)$. 
\item If $e_2(I)=1$ and $I$ is asymptotically normal then 
$r(I)\leq e_1(I)-e_0(I)+\lm(R/I)+1$
\item If $e_2(I)=2$ then 
$r(I)\leq e_1(I)-e_0(I)+\lm(R/I)+2-e_3(I)$.
\end{enumerate}
\end{corollary}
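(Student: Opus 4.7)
Parts (1) and (2) will follow at once from Theorem \ref{proposition-new-bound-1}. When $e_2(I)\in\{0,1\}$, the quadratic correction $(e_2(I)-1)e_2(I)$ vanishes, giving part (1). If in addition $I$ is asymptotically normal, then $e_3(I)\geq 0$ by \cite[Theorem 4.1]{cpr} (as already invoked in Proposition \ref{prop-few-cond-for-rossi-bound}), which lets us drop the $-e_3(I)$ term in the bound, proving part (2).

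For part (3), applying Theorem \ref{proposition-new-bound-1} blindly yields $r(I)\leq e_1(I)-e_0(I)+\lm(R/I)+3-e_3(I)$, one more than the claim. The plan is to recover the missing unit by re-entering the proof of Theorem \ref{proposition-new-bound-1} at the intermediate inequality
\[ r_J(I)\leq e_1(I)-e_0(I)+\lm(R/I)+1+e_3(\widetilde{\ov{\mathcal F}})-e_3(I) \]
and showing that $e_3(\widetilde{\ov{\mathcal F}})\leq 1$ whenever $e_2(I)=2$, in place of the coarser estimate $e_3(\widetilde{\ov{\mathcal F}})\leq s\cdot e_2(I)$ used there.

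The sharp estimate will come from the Rossi-Valla / Huneke-style identities. Since $\widetilde{\ov{\mathcal F}}$ is Ratliff-Rush closed in the two-dimensional Cohen-Macaulay ring $\ov R$, we have $\depth G(\widetilde{\ov{\mathcal F}})\geq 1$, and, writing $v_n=v_n(\widetilde{\ov{\mathcal F}})$, one has
\[ e_2(\widetilde{\ov{\mathcal F}})=\sum_{n\geq 0}nv_n \qquad \text{and} \qquad e_3(\widetilde{\ov{\mathcal F}})=\sum_{n\geq 0}\binom{n}{2}v_n. \]
By Theorem \ref{lemma-bound-on-rtilde}, $\widetilde r_J(\ov{\mathcal F})\leq e_2(\ov{\mathcal F})+1=3$, so $v_n=0$ for $n\geq 3$. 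The remaining constraint $v_1+2v_2=e_2(\widetilde{\ov{\mathcal F}})=e_2(I)=2$ leaves only the two non-negative solutions $(v_1,v_2)=(0,1)$ and $(v_1,v_2)=(2,0)$; the corresponding values of $\sum\binom{n}{2}v_n$ are $1$ and $0$ respectively. Hence $e_3(\widetilde{\ov{\mathcal F}})\leq 1$, and substituting into the displayed inequality gives the bound in part (3).

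The one subtle point I anticipate is justifying $e_3(\widetilde{\ov{\mathcal F}})=\sum\binom{n}{2}v_n$, since $e_3$ lies beyond the degree of the Hilbert polynomial in dimension two. However, because $\depth G(\widetilde{\ov{\mathcal F}})\geq 1$ and a superficial element preserves the $h$-polynomial, this reduces to the one-dimensional identity, which follows from a direct computation with $h(z)=(1-z)H(\widetilde{\ov{\mathcal F}},z)$ in the same spirit as \cite[Theorem 2.5]{rv}.
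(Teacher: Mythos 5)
Your proposal is correct and follows essentially the same route as the paper: part (1) is the same observation that the quadratic term vanishes, and part (3) matches the paper's argument exactly, re-entering the proof of Theorem \ref{proposition-new-bound-1} at the inequality \eqref{3} and using $\sum_{n\geq 1}nv_n=2$ to force $(v_1,v_2)\in\{(0,1),(2,0)\}$ and hence $e_3(\widetilde{\ov{\mathcal F}})\leq 1$ (your extra appeal to Theorem \ref{lemma-bound-on-rtilde} for $v_n=0$, $n\geq 3$, is harmless but unnecessary, since it already follows from nonnegativity of the $v_n$). The only cosmetic difference is in part (2): the paper combines $e_3(I)\leq 0$ (extracted from \eqref{e3} and \eqref{e3tilde}) with $e_3(I)\geq 0$ (asymptotic normality) to get $e_3(I)=0$ exactly, whereas you simply use $e_3(I)\geq 0$ to discard the $-e_3(I)$ term from part (1); both are valid and yield the same bound.
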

\begin{proof}
\begin{enumerate}
\item If $e_2(I)=0$ or $1$ then $r(I)\leq e_1(I)-e_0(I)+\lm(R/I)+1-e_3(I)$ by Theorem \ref{proposition-new-bound-1}. 

\item If $e_2(I)=1$ then  $s\leq 0$ by (\ref{ainvariant}) and hence $e_3({\widetilde{\ov{\mathcal F}}})\leq 0$ using (\ref{e3tilde}). Then by equation (\ref{e3}), $e_3(I)\leq 0$. Since $I$ is asymptotically normal, $e_3(I)\geq 0$ by \cite[Theorem 4.1]{cpr} which implies $e_3(I)=0$. Hence we have $r(I)\leq e_1(I)-e_0(I)+\lm(R/I)+1$.

\item As $\depth G(\widetilde{\ov{\mathcal F}})\geq 1$, we have  $2=e_2(I)=e_2(\widetilde{\ov{\mathcal F}})=\sum_{n\geq 1}nv_n$ (by \cite[Theorem 2.5]{rv}),
which implies either $v_1=0,v_2=1,v_n=0$ for all $n\geq 3$ or $v_1=2,v_n=0$ for all $n\geq 2$.  Hence $e_3(\widetilde{\ov{\mathcal F}})=\sum_{n\geq 2}{n\choose 2}v_n=v_2\leq 1$. Using  (\ref{3}), we get $r(I)\leq e_1(I)-e_0(I)+\lm(R/I)+2-e_3(I)$.
\end{enumerate}
\end{proof}

{\bf Acknowledgement :} We would like to express our sincere gratitude to anonymous referee for meticulous reading and suggesting several editorial improvements. We also thank Prof. M. E. Rossi for her suggestions.

\end{document}